
\documentclass[final]{siamltex}



\usepackage{bm,amsfonts,amssymb}
\usepackage{amsmath}

\usepackage{graphics}
 \usepackage{graphicx}
\usepackage{subfigure}

\title{Distributions of Demmel and Related Condition Numbers\thanks{This work was supported by the Hong Kong Research Grants Council under grant number 616911.} }


\author{Prathapasinghe Dharmawansa\thanks{Department of Communications and Networking, Aalto School of Electrical Engineering, Otakaari 5A, Espoo 02150, Finland ({\tt prathapakd@ieee.org}).}
\and Matthew~R.~McKay\thanks{Department of Electronic and Computer Engineering, Hong Kong University of Science and Technology,
Clear Water Bay, Kowloon, Hong Kong ({\tt
eemckay@ust.hk}).}\and Yang~Chen\thanks{Department of Mathematics,
Faculty of Science and Technology, University of Macau, Av. Padre Tom\'as Pereira, Taipa, Macau China ({\tt
yangbrookchen@yahoo.co.uk, yayangchen@umac.mo}).} }
\begin{document}

\maketitle

\begin{abstract}
Consider a random matrix $\mathbf{A}\in\mathbb{C}^{m\times n}$ ($m
\geq n$) containing independent complex Gaussian entries with zero
mean and unit variance, and let $0<\lambda_1\leq \lambda_{2}\leq
\ldots\leq \lambda_n<\infty$ denote the eigenvalues of
$\mathbf{A}^{*}\mathbf{A}$ where $(\cdot)^*$ represents conjugate-transpose. This paper investigates the
distribution of the random variables $\frac{\sum_{j=1}^n
\lambda_j}{\lambda_k}$, for $k = 1$ and $k = 2$.  These two
variables are related to certain condition number metrics, including
the so-called Demmel condition number, which have been shown to
arise in a variety of applications. For both cases, we derive new
exact expressions for the probability densities, and establish the
asymptotic behavior as the matrix dimensions grow large.  In
particular, it is shown that as $n$ and $m$ tend to infinity with
their difference fixed, both densities scale on the order of $n^3$.
After suitable transformations, we establish exact expressions for
the asymptotic densities, obtaining simple closed-form expressions
in some cases. Our results generalize the work of Edelman on the
Demmel condition number for the case $m = n$.
\end{abstract}

\begin{keywords}
Demmel condition number, Eigenvalues, Random matrix, Wishart distribution
\end{keywords}

\begin{AMS}
15B52, 65F35, 15A18, 62H10
\end{AMS}

\pagestyle{myheadings} \thispagestyle{plain} \markboth{P.
DHARMAWANSA, M. R. MCKAY and Y. CHEN}{ON DEMMEL AND RELATED CONDITION NUMBERS}

\section{Introduction}
Understanding the sensitivity of outcomes to initial conditions in certain
iterative algorithms is important for the design and
analysis of many physical systems. Example algorithms include iterative
methods used in linear algebra, interior-point methods of convex
optimization, and polynomial zero finding \cite{Cucker}. Various condition numbers have been defined as a measure of the sensitivity of the
solutions with respect to small perturbations of the input. One of the earliest studies by Turin \cite{Turing} on iterative algorithms related to matrix inversion and the solution of large systems of linear equations defined two such condition numbers as a measure of the degree of ill-conditioning in a matrix. In particular, for a matrix $\mathbf{A}\in \mathbb{R}^{n\times n}$, the first was referred to as the $N$-condition number, defined as $N(\mathbf{A})N(\mathbf{A}^{-1})/n$ with $N(\cdot)$ representing the Frobenius norm, whilst the second was referred to as the $M$-condition number, defined as $M(\mathbf{A})M(\mathbf{A}^{-1})/n$ with $M(\cdot)$ denoting the operator returning the largest absolute entry of the matrix. However, probably
the best known condition number, introduced in
\cite{Gold}, takes the form
$\kappa(\mathbf{A})=||\mathbf{A}||_2||\mathbf{A}^{-1}||_2$, where
$\mathbf{A}\in\mathbb{C}^{n\times n}$ and $||\cdot||_2$ denotes the
2-norm. This condition number, similar to the $N$-condition number and $M$-condition number, is important in problems involving
matrix inversion and solutions of linear equations.

As conjectured in \cite{Rene}, the calculation of a condition number pertaining to a problem with a certain deterministic data set is as hard as solving the problem itself with the data set. Whilst very difficult to prove in general, a proof of this conjecture has been established for {\emph{conic}} condition numbers in \cite{Cucker1}. To gain further insights into the behavior of the condition  number, probabilistic analysis based on certain probability measures on the data has been
employed; see
\cite{Cucker2,Che,Cucker3,Ed2,Ed1,Smale2,Smale1,Gold1} for a partial list. In
general, most existing literature dealing with the probabilistic
analysis of condition numbers has focused on one of two main
perspectives. The first is to establish bounds on the tail of the
distribution of $\kappa(\mathbf{A})$, which is important
for the geometrical characterization of the condition number \cite{Demmel}. Central to the geometric characterization is the fact that $\kappa(\mathbf{A})$ is proportional to the reciprocal of the distance to the set of ill-conditioned matrices \cite{Demmel}.
The second has been to establish bounds on the expectation of
$\ln(\kappa(\mathbf{A}))$, which is important for characterizing the
average loss of numerical precision and average running time
\cite{Cucker,Ed1} of iterative algorithms. In general, the ``randomness'' of the
condition number is introduced by selecting the elements of
$\mathbf{A}$ to be standard independent normal real/complex random
variables \cite{Ed2,Ed1,Smale1}.

Whilst in this paper we focus primarily on a probabilistic analysis,
it is worth noting that a deterministic version of the problems
considered above, related to the inversion of positive-definite
Hankel (or moment) matrices appearing naturally in random matrix
theory, involves the determination of the smallest eigenvalues of
Hankel matrices of order $n$. See \cite{berg-chen} and
\cite{berg,chen-law,chen-lu,sze,widom-wilf} for contributions
dealing with this problem.

Demmel, in his seminal paper \cite{Demmel}, introduced a new
condition number of \emph{conic} type, defined for square $n \times n$ matrix {\bf A} as
\begin{equation}
\label{dcn}
\kappa_D(\mathbf{A})=||\mathbf{A}||_F||\mathbf{A}^{-1}||_2\:,
\end{equation}
where $||\cdot||_F$ is the Frobenius norm. This, along with a
theorem due to Eckart and Young \cite{Young}, permits a geometrical
characterization \cite{Cucker,Demmel}, which has enabled the
calculation of bounds on the tail distribution of
$\kappa_D(\mathbf{A})$ with the help of integral geometry.  A random
variable similar to $\kappa_{D}(\mathbf{A})$ also arises in problems
dealing with the entanglement of a bi-partite quantum system
\cite{Ake, Chen}. As demonstrated in \cite{Dem2,Demmel},
the condition numbers arising in various contexts, including matrix
inversion, eigenvalue calculation, polynomial zero finding, as well
as pole assignment in linear control systems, can be bounded by $\kappa_D(\mathbf{A})$.
The definition (\ref{dcn}) extends naturally to rectangular matrices by replacing $\mathbf{A}^{-1}$ in (\ref{dcn}) with $\mathbf{A}^\dagger$ (also known as the Moore-Penrose inverse or the pseudo-inverse) to yield
\begin{equation}
\label{dcn1}
\kappa_D(\mathbf{A})=||\mathbf{A}||_F||\mathbf{A}^{\dagger}||_2.
\end{equation}
For $\mathbf{A}\in\mathbb{C}^{m\times n}$ with $\text{rank}(\mathbf{A})=r\;(\leq \min(m,n))$, (\ref{dcn1}) simplifies to
\begin{equation}
\label{dcngen}
\kappa_D(\mathbf{A})=\sqrt{\frac{\sum_{j=1}^r \lambda_j}{\lambda_1}}\;,
\end{equation}
where $\lambda_1\leq \lambda_{2}\leq
\ldots\leq \lambda_r$ are
the non-zero eigenvalues of $\mathbf{A}^*\mathbf{A}$ (or $\mathbf{A}\mathbf{A}^*)$ with $(\cdot)^*$ denoting conjugate-transpose.
In this paper, we deal exclusively with Gaussian random matrices $\mathbf{A}\in\mathbb{C}^{m\times n}$ ($m\geq n$) having independent complex standard normal entries\footnote{Henceforth, when referring to ``Gaussian matrices'', we will implicitly assume that the matrices have independent standard complex normal entries; i.e., this will not be explicitly stated.}. Therefore, (\ref{dcngen}) can be written as
\begin{equation}
\label{dcngen1}
\kappa_D(\mathbf{A})=\sqrt{\frac{\sum_{j=1}^n \lambda_j}{\lambda_1}}\;,
\end{equation}
which follows from the fact that the matrix $\mathbf{A}^*\mathbf{A}$ is positive-definite with probability one \cite{Eaton, Mallik}.
This form of the Demmel condition number was used by Krishnaiah
\emph{et.\ al.} in a sequence of papers \cite{Wal1,Wal2,Krish} to
derive the inferences on certain sub-hypotheses when the total
hypothesis, which is composed of various component sub-hypotheses, is rejected. Recently, it was shown in \cite{Cucker} that
the condition number of the form (\ref{dcn1}) is useful in
analyzing the loss of precision in the computation of the solution to the classical linear least squares problem.
Also, the statistical properties of the Demmel condition number
$\kappa_D(\mathbf{A})$ (for arbitrary $m$ and $n$) have recently
found important applications in the design and analysis of
contemporary wireless communication systems.  Specific examples
include the design of adaptive multi-antenna transmission techniques
\cite{Paul}, and the modeling of physical multi-antenna transmission channels \cite{Ost}.

From the discussion given above, there is clear motivation for
studying the statistical properties of the Demmel condition number
of random matrices.  Here we review some of the key existing
contributions dealing with this problem. These contributions include
\cite{Cucker,Demmel,Ed2,Ed3,Ed1}, which investigated the exact
distributions as well as bounds on the tail probabilities over
various regimes depending on the size of the random matrix.
For instance, Demmel showed that for Gaussian matrices
$\mathbf{A}\in\mathbb{C}^{n\times n}$ \cite{Demmel},
\[
\frac{(1-1/x)^{2n^2-2}}{2n^4x^2}<\Pr\left(\kappa_D(\mathbf{A})>x\right)<\frac{e^2n^5(1+n^2/x)^{2n^2-2}}{x^2}
\; ,
\]
while Edelman concluded that as $n\to\infty$ \cite{Ed3},
\[
\Pr\left(\frac{2\kappa_D(\mathbf{A})}{n^{\frac{3}{2}}}<x\right)\to
\exp\left(-\frac{4}{x^2}\right) \; .
\]
For Gaussian matrices $\mathbf{A}\in \mathbb{C}^{m\times n}$ with $m\geq n$, one could also exploit the bound $\kappa_D(\mathbf{A})\leq \sqrt{n} \kappa(\mathbf{A})$ (following from the fact that $||\mathbf{A}||_F\leq \sqrt{n} ||\mathbf{A}||_2$) along with the upper bound on the distribution tail of $\kappa(\mathbf{A})$ given in \cite[Theorem 4.6]{Dong} to derive an upper bound on the distribution tail of $\kappa_D(\mathbf{A})$. This, in turn, reveals that the $n^{3/2}$ asymptotic scaling order observed in \cite{Ed3} for square Gaussian matrices also serves as an upper bound on the scaling order for rectangular Gaussian matrices.  The exact scaling order, however, has yet to be determined.
In addition to these results, more recently, various exact closed-form expressions for the distribution of
the Demmel condition number for Gaussian matrices $\mathbf{A}\in \mathbb{C}^{m\times n}$ have been reported in \cite{Caj,Mat,Lu}. These results, however, are rather complicated and they become unwieldy
computationally when the matrix dimensions are not small. Moreover,
the expressions in \cite{Caj,Mat,Lu} are not suitable for
understanding the behavior of the scaled Demmel condition number as
the matrix dimensions grow large. In this respect, only for the case
$m = n$ given in \cite{Caj,Mat,Lu}, the asymptotic behavior of the
exact probability density function (p.d.f.) of the Demmel condition
number is known. Establishing the asymptotic properties for more
general $m\times n$ ($m\leq n$) matrices is one of the key
objectives to be addressed in this paper.

In addition to $\kappa_D(\mathbf{A})$, other related metrics of the
form $\lambda_k/\sum_{j=1}^n\lambda_j, k=1,2,\ldots,n$ have been considered by
Krishnaiah \emph{et.\ al.} \cite{Wal1,Wal2,Krish} in certain
hypothesis testing problems. Therein, they employed a result due to
Davis \cite{Davis}, which gives an expression for the p.d.f. of
$\lambda_k/\sum_{j=1}^n\lambda_j$ by establishing a Laplace
transform relationship with respect to the p.d.f.\ of $\lambda_k$.
However, the expressions obtained are very complicated, both
analytically and numerically. This complexity, in turn, does not easily facilitate the
understanding of the asymptotic behavior of the p.d.f.\ as the
matrix dimensions become large. In this paper, we tackle this
problem by adopting a different approach, making use of techniques
developed in \cite{Mehta}. We focus our analysis on the quantities, $\kappa_D^2(\mathbf{A})$ and
\[
\kappa_E^2(\mathbf{A})=\frac{\sum_{j=1}^n \lambda_j}{\lambda_2}
\; ,
\]
whilst noting that our approach may also pave the way for studying
more generalized metrics of the form
$\frac{\sum_{j=1}^n\lambda_j}{\lambda_k},k=1,2,\ldots,n$. We derive new
expressions for the exact and asymptotic distributions of
$\kappa_D^2(\mathbf{A})$ and $\kappa^2_E(\mathbf{A})$ for Gaussian matrices $\mathbf{A}\in \mathbb{C}^{m\times n}$ ($m\geq n$) by
adopting a moment generating function (m.g.f.) based approach. We
show the interesting result that both $\kappa_D^2(\mathbf{A})$ and
$\kappa_E^2(\mathbf{A})$ scale on the order of $n^3$ when $m$ and
$n$ tend to infinity in such a way that $m-n$ remains a fixed
integer. These results agree with and generalize the scaling
behavior obtained previously by Edelman in \cite{Ed3} for $n\times n$ Gaussian
matrices. The
scaled asymptotic p.d.f.\ which we derive for
$\kappa_D^2(\mathbf{A})$ is expressed in closed form for arbitrary
$m \geq n$, whilst for $\kappa_E^2(\mathbf{A})$ it involves a single
finite-range integral for the general case and a closed-form
solution for the scenario $m=n$.
\section{Preliminaries}
To facilitate our main derivations, we will require the following
preliminary results and definitions.
\begin{definition}
Let the elements of $\mathbf{A}\in\mathbb{C}^{m\times n}$ ($m\geq
n$) be independent and identically distributed complex standard
normal variables. Then the matrix $\mathbf{W}=\mathbf{A}^*
\mathbf{A}$ is said to follow a complex Wishart distribution, i.e.,
$\mathbf{W}\sim\mathcal{W}_n(m,\mathbf{I}_n)$.
\end{definition}

\begin{theorem}
The joint density of
the ordered eigenvalues $0< \lambda_1\leq \lambda_2\leq
\ldots\leq\lambda_n < \infty$ of $\mathbf{W}$ is given by \cite{JamesAT}
\begin{equation}
\label{wpdf}
f\left(\lambda_1,\lambda_2,\ldots,\lambda_n\right)=K_{n,\alpha}\;\Delta_n^2(\boldsymbol{\lambda})
\prod_{j=1}^n\lambda_j^{\alpha} e^{-\lambda_j}
\end{equation}
where, for
$\boldsymbol{\lambda}=\{\lambda_1,\lambda_2,\ldots,\lambda_n\}$,
$\Delta_n(\boldsymbol{\lambda}):=\prod_{1\leq j<k\leq
n}(\lambda_k-\lambda_j)$, $\alpha=m-n$, and ${K}_{n,\alpha}=n!
\left( \prod_{j=0}^{n-1}(j+1)!(j+\alpha)!\right)^{-1}$.
\end{theorem}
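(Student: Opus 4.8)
The plan is to begin with the known density of the complex Wishart matrix $\mathbf{W} = \mathbf{A}^*\mathbf{A}$ itself, and then transform to its eigenvalues by means of the spectral decomposition, integrating the eigenvectors out. Since $\mathbf{A}$ has i.i.d.\ standard complex normal entries, a direct change of variables from the real coordinates of $\mathbf{A}$ (or an appeal to the standard Wishart construction) shows that on the cone of positive-definite Hermitian matrices $\mathbf{W}$ admits the density
\[
f(\mathbf{W}) = \frac{1}{\tilde{\Gamma}_n(m)}(\det\mathbf{W})^{m-n}\, e^{-\operatorname{tr}\mathbf{W}},
\]
where $\tilde{\Gamma}_n(m) = \pi^{n(n-1)/2}\prod_{j=0}^{n-1}(m-j-1)!$ is the complex multivariate gamma function. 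The crucial observation is that this density depends on $\mathbf{W}$ only through the unitary invariants $\det\mathbf{W} = \prod_{j=1}^n\lambda_j$ and $\operatorname{tr}\mathbf{W} = \sum_{j=1}^n\lambda_j$, so that once we pass to eigenvalue--eigenvector coordinates the eigenvector dependence resides entirely in the Jacobian and the measure of integration.

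Next I would introduce the decomposition $\mathbf{W} = \mathbf{U}\,\boldsymbol{\Lambda}\,\mathbf{U}^*$ with $\boldsymbol{\Lambda} = \operatorname{diag}(\lambda_1,\ldots,\lambda_n)$ and $\mathbf{U}$ unitary, and compute the Jacobian of the map $\mathbf{W}\mapsto(\boldsymbol{\Lambda},\mathbf{U})$. Writing the Hermitian differential as $\mathbf{U}^*(d\mathbf{W})\mathbf{U} = d\boldsymbol{\Lambda} + (\mathbf{U}^*d\mathbf{U})\boldsymbol{\Lambda} - \boldsymbol{\Lambda}(\mathbf{U}^*d\mathbf{U})$ and examining its diagonal and off-diagonal entries, the off-diagonal $(j,k)$ entry carries a factor $(\lambda_k-\lambda_j)$ in each of its real and imaginary parts; collecting these over all pairs $j<k$ produces the factor $\prod_{j<k}(\lambda_k-\lambda_j)^2 = \Delta_n^2(\boldsymbol{\lambda})$. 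This step is the main obstacle: one must account correctly for the real dimension of the complex Hermitian space, handle the invariance of $\mathbf{U}$ under right multiplication by diagonal phase matrices (so that $\mathbf{U}$ effectively ranges over the flag manifold $U(n)/U(1)^n$ rather than all of $U(n)$), and confirm that the power of the Vandermonde is $2$ rather than $1$, reflecting the complex rather than real symmetry class.

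Integrating the eigenvector matrix over its Haar measure then contributes only a finite constant, because the integrand no longer depends on $\mathbf{U}$; combining this constant with $1/\tilde{\Gamma}_n(m)$ and the Jacobian yields a density proportional to $\Delta_n^2(\boldsymbol{\lambda})\prod_{j=1}^n\lambda_j^{m-n}e^{-\lambda_j}$, with $\alpha = m-n$ as claimed. At this stage the eigenvalues are unordered; imposing the ordering $\lambda_1\leq\cdots\leq\lambda_n$ restricts to one of the $n!$ symmetric sectors of $(0,\infty)^n$ and thus multiplies the constant by $n!$.

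Finally I would fix the normalization by requiring that the density integrate to unity. Rather than track every group-volume constant explicitly, I would determine $K_{n,\alpha}$ directly from the identity
\[
\int_{(0,\infty)^n}\Delta_n^2(\boldsymbol{\lambda})\prod_{j=1}^n\lambda_j^{\alpha}e^{-\lambda_j}\,d\lambda_1\cdots d\lambda_n = \prod_{j=0}^{n-1}(j+1)!\,(j+\alpha)!,
\]
which is the partition function of the Laguerre ensemble. This identity follows either from the Selberg integral or, more transparently, by expanding $\Delta_n(\boldsymbol{\lambda})$ as a determinant of (normalized) Laguerre polynomials $L_j^{(\alpha)}$, applying the Andr\'eief identity to reduce the $n$-fold integral to a single $n\times n$ determinant, and invoking the orthogonality relation $\int_0^\infty L_j^{(\alpha)}(\lambda)L_k^{(\alpha)}(\lambda)\,\lambda^\alpha e^{-\lambda}\,d\lambda \propto \delta_{jk}$. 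Dividing the factor $n!$ arising from the ordering by this partition function gives $K_{n,\alpha} = n!\bigl(\prod_{j=0}^{n-1}(j+1)!(j+\alpha)!\bigr)^{-1}$, completing the proof.
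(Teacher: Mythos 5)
Your derivation is correct: the Jacobian of the spectral decomposition does contribute $\Delta_n^2(\boldsymbol{\lambda})$ in the complex (Hermitian) case, the quotient by the diagonal torus is the right way to handle the eigenvector ambiguity, your Laguerre partition-function identity $\int_{(0,\infty)^n}\Delta_n^2(\boldsymbol{\lambda})\prod_j\lambda_j^{\alpha}e^{-\lambda_j}\,d\boldsymbol{\lambda}=\prod_{j=0}^{n-1}(j+1)!\,(j+\alpha)!$ checks out via Andr\'eief and Laguerre orthogonality, and the factor $n!$ from passing to the ordered sector reproduces $K_{n,\alpha}$ exactly. Note that the paper itself offers no proof of this theorem --- it is quoted as a known result from James \cite{JamesAT} --- so there is nothing to compare against; your argument is the standard one found in that literature, and fixing the normalization a posteriori (rather than tracking the group-volume constants) is a legitimate shortcut since it only requires knowing the density up to a constant.
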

\begin{lemma}
For $\rho>-1,$ the generalized Laguerre polynomial of degree $N$,
$L_{N}^{(\rho)}(z)$, is defined by \cite{Erdelyi}:
\begin{eqnarray}
L_{N}^{(\rho)}(z) =\frac{(\rho+1)_N}{N!}{}_1F_1(-N,\rho+1,z) =\frac{(\rho+1)_N}{N!}\sum_{j=0}^N\frac{(-N)_j}{(\rho+1)_j}\frac{z^j}{j!}\;,\label{prop1}
\end{eqnarray}
with $k^{\rm th}$ derivative
\begin{eqnarray}
\frac{d^k}{dz^k}L_{N}^{(\rho)}(z)
=(-1)^kL_{N-k}^{(\rho+k)}(z)\;,\label{prop3}
\end{eqnarray}
where $(a)_j=a(a+1)\ldots(a+j-1)$ with  $(a)_0=1$ is
the Pochhammer symbol and ${}_1F_1(a;c;z)$ is the confluent hypergeometric function of the first kind.
\end{lemma}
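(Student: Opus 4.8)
The lemma bundles together a \emph{definition} and a genuine \emph{claim}. The first displayed equality in (\ref{prop1}) is not something to be proved: it is merely the terminating power-series form of the confluent hypergeometric function ${}_1F_1(-N;\rho+1;z)=\sum_{j=0}^{N}\frac{(-N)_j}{(\rho+1)_j}\frac{z^j}{j!}$, the sum truncating at $j=N$ because $(-N)_j=0$ once $j>N$. The real content is the differentiation formula (\ref{prop3}), and this is what I would establish. My plan is to prove the case $k=1$ directly from the polynomial series in (\ref{prop1}), and then to obtain the general statement by a one-line induction on $k$.

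For the base case $k=1$ I would differentiate the finite sum in (\ref{prop1}) term by term, which annihilates the $j=0$ term and leaves
\[
\frac{d}{dz}L_N^{(\rho)}(z)=\frac{(\rho+1)_N}{N!}\sum_{j=1}^{N}\frac{(-N)_j}{(\rho+1)_j}\frac{z^{j-1}}{(j-1)!}\;.
\]
After reindexing with $i=j-1$, the task reduces to rewriting the shifted Pochhammer symbols. Using $(-N)_{i+1}=(-N)\,(-(N-1))_i$, $(\rho+1)_{i+1}=(\rho+1)\,(\rho+2)_i$, and $(\rho+1)_N=(\rho+1)\,(\rho+2)_{N-1}$, the factor $(\rho+1)$ cancels and the prefactor collapses via $-N/N!=-1/(N-1)!$. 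Collecting the surviving terms yields exactly $-\,\frac{(\rho+2)_{N-1}}{(N-1)!}\sum_{i=0}^{N-1}\frac{(-(N-1))_i}{(\rho+2)_i}\frac{z^i}{i!}$, which by (\ref{prop1}), read with $N\mapsto N-1$ and $\rho\mapsto\rho+1$, is precisely $-L_{N-1}^{(\rho+1)}(z)$. This establishes (\ref{prop3}) for $k=1$.

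The general case then follows by induction: assuming $\frac{d^k}{dz^k}L_N^{(\rho)}(z)=(-1)^kL_{N-k}^{(\rho+k)}(z)$, one more differentiation together with the $k=1$ identity applied to $L_{N-k}^{(\rho+k)}$ gives $(-1)^k\bigl(-L_{N-k-1}^{(\rho+k+1)}(z)\bigr)=(-1)^{k+1}L_{N-(k+1)}^{(\rho+(k+1))}(z)$, completing the step. The whole argument is elementary; no analytic subtleties arise, since every series in sight is a polynomial, so term-by-term differentiation is unconditionally valid. The only point demanding care — and the place where a careless computation would go wrong — is the Pochhammer bookkeeping in the base case, where one must verify that the index shift and the cancellations reproduce the series for $L_{N-1}^{(\rho+1)}(z)$ exactly, \emph{including} the correct reduction of the upper summation limit from $N$ to $N-1$.
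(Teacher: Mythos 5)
Your argument is correct: the term-by-term differentiation, the reindexing $i=j-1$, and the Pochhammer identities $(-N)_{i+1}=(-N)(-(N-1))_i$, $(\rho+1)_{i+1}=(\rho+1)(\rho+2)_i$, $(\rho+1)_N=(\rho+1)(\rho+2)_{N-1}$ all check out, and the induction on $k$ is immediate. Note, however, that the paper offers no proof of this lemma at all --- it is stated as a standard preliminary and attributed to the reference \cite{Erdelyi} (Szeg\H{o}'s \emph{Orthogonal Polynomials}) --- so there is no argument in the paper to compare against; your elementary, self-contained derivation is a perfectly adequate substitute for the citation.
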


\begin{lemma}\label{lem:stirling}
The monomial $z^n$ can be expanded in terms of the  Stirling number of the second kind,
$\mathtt{S}^{(m)}_n$, as follows \cite{Abromwitz}:
\begin{eqnarray}
z^n = \sum_{j=0}^n\mathtt{S}^{(j)}_n\;z(z-1)(z-2)\cdots (z-j+1), \;
\label{ster1}
\end{eqnarray}
where
\begin{equation}
\mathtt{S}^{(m)}_n =\frac{1}{m !} \sum_{j=0}^m (-1)^{m-j}\binom{m}{j}j^n,
\quad \mathtt{S}^{(0)}_0=\mathtt{S}^{(n)}_n =\mathtt{S}^{(1)}_n=1
\nonumber,
\end{equation}
and $\binom{m}{j}=\frac{m!}{j!(m-j)!}$.
\end{lemma}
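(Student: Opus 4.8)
The plan is to establish \eqref{ster1} by recognizing $\mathtt{S}^{(m)}_n$ as the Stirling number of the second kind, which counts the number of partitions of an $n$-element set into exactly $m$ nonempty blocks, and then proving the expansion by a double-counting argument followed by a polynomial-extension step. Since both sides of \eqref{ster1} are polynomials in $z$ of degree $n$, it suffices to verify the identity for all positive integers $z$; the two polynomials then agree at infinitely many points and hence coincide for every complex $z$.

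First I would justify the explicit formula for $\mathtt{S}^{(m)}_n$. By inclusion–exclusion, the number of surjections from an $n$-element set onto an $m$-element set equals $\sum_{j=0}^m (-1)^{m-j}\binom{m}{j} j^n$, where $j^n$ counts all maps into a chosen $j$-element subset of the codomain and the alternating binomial weights remove those maps that miss at least one target element. Dividing by $m!$, the number of orderings of the $m$ target elements, converts the count of surjections into the count of unordered partitions into $m$ blocks, which is exactly the right-hand side of the displayed formula for $\mathtt{S}^{(m)}_n$. The boundary values $\mathtt{S}^{(0)}_0=\mathtt{S}^{(n)}_n=\mathtt{S}^{(1)}_n=1$ follow immediately from this interpretation.

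Next I would prove \eqref{ster1} for a positive integer $z$ by counting the functions from an $n$-element set $[n]$ into a $z$-element set $[z]$ in two ways. On one hand there are plainly $z^n$ such functions. On the other hand, grouping the functions by the size $j$ of their image, one chooses the image in $\binom{z}{j}$ ways and then counts the surjections of $[n]$ onto that fixed $j$-element set, of which there are $j!\,\mathtt{S}^{(j)}_n$. Summing over $j$ gives $z^n=\sum_{j=0}^{n}\binom{z}{j}\, j!\,\mathtt{S}^{(j)}_n=\sum_{j=0}^{n}\mathtt{S}^{(j)}_n\, z(z-1)\cdots(z-j+1)$, since $\binom{z}{j}\, j!=z(z-1)\cdots(z-j+1)$. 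Invoking the polynomial-identity argument from the first paragraph then upgrades this to an identity valid for all $z$, completing the proof.

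There is no substantial obstacle here, as this is a classical handbook identity; the only point requiring care is the passage from the integer-valued double count to the polynomial statement, which is handled cleanly by the degree argument. An alternative, purely algebraic route proceeds by induction on $n$: multiplying the expansion for $z^n$ by $z$ and using the elementary relation $z\cdot z(z-1)\cdots(z-j+1)=z(z-1)\cdots(z-j)+j\,z(z-1)\cdots(z-j+1)$ reproduces the expansion for $z^{n+1}$ precisely when the coefficients satisfy the Stirling recurrence $\mathtt{S}^{(j)}_{n+1}=\mathtt{S}^{(j-1)}_{n}+j\,\mathtt{S}^{(j)}_{n}$, which one checks directly from the explicit formula.
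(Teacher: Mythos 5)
Your proof is correct. Note, however, that the paper does not prove this lemma at all: it is quoted as a known handbook identity with a citation to Abramowitz and Stegun, so there is no ``paper proof'' to compare against. Your argument is the standard one and is complete: the inclusion--exclusion count of surjections correctly yields the closed form for $\mathtt{S}^{(m)}_n$, the double count of maps $[n]\to[z]$ by image size gives $z^n=\sum_{j}\binom{z}{j}\,j!\,\mathtt{S}^{(j)}_n$ for positive integers $z$ (the terms with $j>z$ or $j>n$ vanish, so the summation range is harmless), and the degree-$n$ polynomial identity argument legitimately extends this to all $z$ --- which matters here, since the paper later applies \eqref{ster1} with non-integer arguments such as $\tilde c_l$. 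The alternative inductive route via the recurrence $\mathtt{S}^{(j)}_{n+1}=\mathtt{S}^{(j-1)}_n+j\,\mathtt{S}^{(j)}_n$ is equally valid. Either version would serve as a self-contained justification of a statement the paper simply takes on faith.
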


Finally, we use the following notation to compactly represent the
determinant of an $N\times N$ block matrix:
\begin{equation}
\begin{split}
\det\left[a_{i,j}\;\; b_{i,k-2}\right]_{\substack{i=1,2,\ldots,N\\
 j=1,2\\
 k=3,4,\ldots,N}}&=\left|\begin{array}{cccccc}
 a_{1,1} & a_{1,2}& b_{1,1}& b_{1,2}& \ldots & b_{1,N-2}\\
  a_{2,1} & a_{2,2}& b_{2,1}& b_{2,2}& \ldots & b_{2,N-2}\\
  \vdots & \vdots & \vdots & \vdots &\ddots & \vdots \\
  a_{N,1} & a_{N,2}& b_{N,1}& b_{N,2}& \ldots & b_{N,N-2}
 \end{array}\right|.
 \end{split}
\end{equation}

\section{M.g.f. and p.d.f. of $\kappa_D^2(\mathbf{A})$}
In this section we derive new expressions for the m.g.f. and the
p.d.f. of $\kappa_D^2(\mathbf{A})$. Our approach follows along similar lines to \cite[Chap.\ 22]{Mehta}.

The m.g.f. of $\kappa_D^2(\mathbf{A})$ is given by
\begin{align}
\mathcal{M}_{\kappa_D^2(\mathbf{A})}(s) &= E \left[ e^{-s
\kappa_D^2(\mathbf{A})} \right] \nonumber \\
&= \int_{0}^{\infty}\int_{\mathcal{R}_1}
e^{-s\:\frac{\sum_{j=1}^{n}\lambda_j}{\lambda_1}}\:f(\lambda_1,\lambda_2,...,\lambda_n)d\lambda_2...d\lambda_n
d\lambda_1, \nonumber
\end{align}
where $\mathcal{R}_1=\{\lambda_1\leq \lambda_2\leq
\ldots\leq\lambda_n<\infty\}$.
Let $\lambda_1=x.$ An easy manipulation gives
\begin{equation}
\begin{split}
\label{eq:mgf}
\mathcal{M}_{\kappa_D^2(\mathbf{A})}(s) =e^{-s}\int_0^\infty \int_{\mathcal{R}_1}
& e^{-s\frac{\sum_{j=2}^n\lambda_j}{x}} f\left(x,\lambda_2,\ldots,\lambda_n\right)
d\lambda_2\ldots d\lambda_n\:dx.
\end{split}
\end{equation}
The theorem below gives a closed-form representation for the m.g.f.
\begin{theorem}\label{p1}
The m.g.f. of $\kappa_D^2(\mathbf{A})$ is given by
\begin{equation}
\begin{split}
\label{demgf}
\mathcal{M}_{\kappa_D^2(\mathbf{A})}(s) =\frac{n!\;e^{-ns}}{(m-1)!}
\int_0^\infty &
\frac{x^{mn-1}e^{-nx}}{(s+x)^{mn-\alpha-1}}\;\text{\em det}\left[L^{(l+1)}_{n+k-l-1}(-s-x)\right]_{k,l=1,2,\ldots,\alpha}dx.
\end{split}
\end{equation}
\end{theorem}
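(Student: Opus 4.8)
The plan is to start from the representation (\ref{eq:mgf}), insert the joint density (\ref{wpdf}) with $\lambda_1=x$, and collapse the $n$-fold integral to a single integral over $x$ whose integrand is the claimed $\alpha\times\alpha$ Laguerre determinant. First I would peel the smallest eigenvalue off the Vandermonde, $\Delta_n(\boldsymbol{\lambda})=\prod_{k=2}^n(\lambda_k-x)\,\Delta_{n-1}(\lambda_2,\ldots,\lambda_n)$, and symmetrize: the remaining integrand is symmetric in $\lambda_2,\ldots,\lambda_n$, so the ordered region $\{x\le\lambda_2\le\cdots\le\lambda_n\}$ may be replaced by $[x,\infty)^{n-1}$ at the cost of $1/(n-1)!$. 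The shift $\mu_j=\lambda_j-x$ converts the per-variable weight into $\mu_j^2(\mu_j+x)^\alpha e^{-\mu_j(1+s/x)}$ and collects the prefactor $e^{-ns}x^\alpha e^{-nx}$, and the rescaling $\mu_j=t_j/(1+s/x)$ pulls out a power of $(1+s/x)=(s+x)/x$. A short power count then shows that $x^\alpha$ times the accumulated scaling factor is exactly $x^{mn-1}(s+x)^{-(mn-\alpha-1)}$, matching the claimed prefactor, leaving the single residual quantity
\[
J(y)=\int_{[0,\infty)^{n-1}}\Delta_{n-1}^2(\mathbf{t})\prod_{j}t_j^2\,(t_j+y)^\alpha e^{-t_j}\,d\mathbf{t},\qquad y=s+x .
\]
So the theorem reduces to showing $J(y)=(\text{const})\,\det[L^{(l+1)}_{n+k-l-1}(-y)]_{k,l=1}^{\alpha}$, with a constant that (after cancelling against $K_{n,\alpha}/(n-1)!$) gives $n!/(m-1)!$.

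The key device for evaluating $J(y)$ is a confluent-Vandermonde representation of the polynomial factor. Writing $N=n-1$ and introducing $\alpha$ auxiliary variables $z_1,\ldots,z_\alpha$, I would use $\Delta_{N+\alpha}(\mathbf{t},\mathbf{z})=\Delta_N(\mathbf{t})\Delta_\alpha(\mathbf{z})\prod_{j,p}(z_p-t_j)$ to write $\Delta_N(\mathbf{t})\prod_j(t_j+y)^\alpha=(-1)^{N\alpha}\lim_{\mathbf{z}\to -y}\Delta_{N+\alpha}(\mathbf{t},\mathbf{z})/\Delta_\alpha(\mathbf{z})$. Substituting this for one copy of $\Delta_N$ inside $J(y)$, I would expand $\Delta_{N+\alpha}(\mathbf{t},\mathbf{z})=\det[\tilde L^{(2)}_{i-1}(v_j)]$ in the monic Laguerre basis $\tilde L^{(2)}_k=(-1)^k k!\,L^{(2)}_k$ (legitimate, since monic polynomials reproduce the Vandermonde), Laplace-expand along the $\alpha$ columns carrying the $z_p$, and integrate the remaining $t$-columns against $\Delta_N(\mathbf{t})\prod_j t_j^2 e^{-t_j}$ by Andr\'eief's identity.

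The advantage of the Laguerre basis is that orthogonality,
\[
\int_0^\infty \tilde L^{(2)}_a(t)\,\tilde L^{(2)}_b(t)\,t^2 e^{-t}\,dt=\delta_{ab}\,a!\,(a+2)! ,
\]
collapses the Andr\'eief determinant: it is nonzero only for the single Laplace term in which the $t$-columns absorb the rows of degree $0,\ldots,N-1$ and the $z$-columns retain the top rows of degree $N,\ldots,N+\alpha-1$. This leaves $J(y)$ proportional to $\lim_{\mathbf{z}\to -y}\det[\tilde L^{(2)}_{N+l-1}(z_k)]/\Delta_\alpha(\mathbf{z})$, whose confluent limit is the generalized Wronskian $\tfrac{1}{\prod_{j=0}^{\alpha-1}j!}\det[\,(d^{k-1}/dz^{k-1})\tilde L^{(2)}_{N+l-1}\,]$ evaluated at $z=-y$. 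Applying the derivative rule (\ref{prop3}), namely $\tfrac{d^{k-1}}{dz^{k-1}}L^{(2)}_{M}(z)=(-1)^{k-1}L^{(k+1)}_{M-k+1}(z)$, and factoring the $k$- and $l$-dependent constants and signs out of the determinant converts this (after a transpose) into $\det[L^{(l+1)}_{n+k-l-1}(-y)]_{k,l=1}^{\alpha}$; the accumulated signs collapse to $+1$ because $\alpha(\alpha+1)$ is even.

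The main obstacle is precisely this reduction of the determinant size from $n-1$ down to $\alpha$: everything hinges on choosing the Laguerre rather than the monomial basis, so that orthogonality annihilates all but one Laplace term, and on taking the confluent limit correctly to recover the Laguerre derivatives. The remaining labor---tracking the accumulated factorials, the powers of $(s+x)/x$, and the signs, and verifying that the leftover constant times $K_{n,\alpha}^{-1}(n-1)!^{-1}$ simplifies to $n!/(m-1)!$---is routine factorial bookkeeping. As a consistency check, the case $\alpha=0$ makes the determinant empty and reduces $J$ directly to a Selberg integral via Andr\'eief, recovering Edelman's $m=n$ scaling and confirming the constant.
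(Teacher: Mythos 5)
Your proposal is correct and follows the paper's proof for the first half essentially verbatim: peel $\lambda_1=x$ out of the Vandermonde, symmetrize at the cost of $1/(n-1)!$, shift and rescale by $(x+s)/x$, and reduce everything to the single quantity $J(s+x)$, which is the paper's $(-1)^{\alpha(n-1)}Q(n-1,\alpha,-s-x)$; your power count $x^{\alpha+(n-1)(m+1)}(s+x)^{-(n-1)(m+1)}=x^{mn-1}(s+x)^{-(mn-\alpha-1)}$ checks out. The only divergence is in how the remaining multiple integral is evaluated. The paper simply cites the ready-made identity from Mehta (Section 22.2.2) expressing $Q(n,\alpha,z)$ as $Q(n,0,z)$ times a Wronskian-type determinant of the monic orthogonal polynomials for the weight $x^2e^{-x}$, and then applies the Laguerre derivative rule (\ref{prop3}). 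You instead re-derive that identity from scratch: introduce $\alpha$ auxiliary variables, absorb $\prod_j(t_j+y)^\alpha$ into an enlarged Vandermonde, expand in the monic Laguerre basis, kill all but one Laplace term by orthogonality via Andr\'eief, and take the confluent limit to produce the derivative determinant. This is precisely the mechanism behind Mehta's formula, and it is also the technique the paper itself deploys explicitly later for $\kappa_E^2(\mathbf{A})$ (Eqs.\ (\ref{eqmehta1})--(\ref{Qans}), where the confluent limit is handled via the cited result of Smith and Garth). So your argument is self-contained where the paper's is citation-based, at the cost of the sign and factorial bookkeeping you correctly flag as routine; the final entries $L^{(l+1)}_{n+k-l-1}(-s-x)$ emerge from $\frac{d^{k-1}}{dz^{k-1}}L^{(2)}_{n+l-2}(z)=(-1)^{k-1}L^{(k+1)}_{n+l-k-1}(z)$ after the transpose exactly as you describe.
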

\begin{proof}
Substituting (\ref{wpdf}) into (\ref{eq:mgf}) we find, keeping the
$x$ integration last,
\begin{equation*}
\begin{split}
\mathcal{M}_{\kappa_D^2(\mathbf{A})}(s)=K_{n,\alpha}\;e^{-s}& \int_0^\infty
x^\alpha e^{-x}\left\{\int_{\mathcal{R}_1}
\prod_{j=2}^n(\lambda_j-x)^2\lambda_j^{\alpha} e^{-\lambda_j\left(1+\frac{s}{x}\right)}\right.\\
& \hspace{2.3cm}\left.\times
\prod_{2\leq i<j\leq n}(\lambda_j-\lambda_i)^2\;d\lambda_2\ldots d\lambda_n\right\}dx.
\end{split}
\end{equation*}
Relabeling variables as $\lambda_{j} = x_{j-1}, \, j=2,3,\ldots,n$,
and exploiting symmetry to remove the ordered region of integration
(i.e., achieved by dividing through by $(n-1)!$) gives
\begin{align}
\label{eq:inter}
\mathcal{M}_{\kappa_D^2(\mathbf{A})}(s)=\frac{K_{n,\alpha}\;e^{-s}}{(n-1)!}& \int_0^\infty
x^\alpha e^{-x}\left\{\int_{[x,\infty)^{n-1}}
\prod_{j=1}^{n-1}(x_j-x)^2x_j^{\alpha} e^{-x_j\left(1+\frac{s}{x}\right)}\right.\nonumber\\
& \hspace{3cm}\times
\Delta^2_{n-1}(\mathbf{x})\;dx_1dx_2\ldots dx_{n-1}\Biggr\}\;dx.
\end{align}
Now we apply the change of variables
$y_j=\frac{(x+s)}{x}\left(x_j-x\right), \, j=1,2,\ldots,n-1$ to the
inner $(n-1)$-fold integral in (\ref{eq:inter}) with some algebraic
manipulation to obtain
\begin{align}
\label{mgfbef}
\mathcal{M}_{\kappa_D^2(\mathbf{A})}(s)=\frac{K_{n,\alpha}\;e^{-ns}}{(n-1)!}
\int_0^\infty & \frac{x^{mn-1}e^{-nx}}{(x+s)^{mn-\alpha-1}}
(-1)^{\alpha(n-1)}Q(n-1,\alpha,-x-s)\:dx,
\end{align}
where we have defined
\begin{align}
\label{eq:Qdef} Q(n,\alpha,z) := \int_{[0,\infty)^n}
\Delta^2_{n}(\mathbf{y}) & \prod_{j=1}^{n} y_j^2
e^{-y_j}(z-y_j)^{\alpha}\; dy_1dy_2\ldots dy_{n} \; .
\end{align}

\newtheorem{rem}{Remark}

The remaining task is to obtain a closed-form solution to
$Q(n,\alpha,z)$. We point out that a solution to a generalization of
this integral, not restricting $\alpha$ to an integer, has been
derived in \cite{chenmckay,Ozipov2010} as a solution to a
Painlev\'{e} V equation. Here, for $\alpha$ an integer, we establish
a much simpler closed-form algebraic solution. For this purpose, we
employ a result from random matrix theory \cite[Section
22.2.2]{Mehta}, which gives
\begin{equation*}
\label{MT}
Q(n,\alpha,z)=\tilde b\;Q(n,0,z)\;\text{det}\left[\frac{d^l}{dz^l}C_{n+k}(z)\right]_{k,l=0,1,\ldots,\alpha-1}\:,
\end{equation*}
where $\tilde b=\left(\prod_{j=0}^{\alpha-1}j!\right)^{-1}.$ For our problem
$C_{j}(x)$ are monic polynomials orthogonal with respect to the
weight $x^2 e^{-x}$, over $0\leq x<\infty$. We see that $C_j(x)=(-1)^j\:j!\:L_j^{(2)}(x).$
Hence (\ref{eq:Qdef}) becomes
\begin{align}
\label{eq:int}
 Q(n,\alpha,z)& =\tilde b\;Q(n,0,z)\;\text{det}
\left[(-1)^{n+k}(n+k)!\frac{d^l}{dz^l}L^{(2)}_{n+k}(z)\right]_{k,l=0,1,\ldots,\alpha-1}\nonumber\\
&=\tilde b\;Q(n,0,z)(-1)^{n\alpha}\prod_{j=0}^{\alpha-1}(n+j)!\;
\text{det}
\left[L^{(2+l)}_{n+k-l}(z)\right]_{k,l=0,1,\ldots,\alpha-1}.
\end{align}
Moreover, we have $Q(n,0,z)=\prod_{j=0}^{n-1}(1+j)!(2+j)!$, which
when used with (\ref{eq:int}) in (\ref{mgfbef}), followed by
translating the indices from $k,l=0,1,\ldots,\alpha-1$ to
$k,l=1,2,\ldots,\alpha$ gives (\ref{demgf}).
\end{proof}

Now we take the inverse Laplace transform of
(\ref{demgf}) to arrive at the p.d.f.
of $\kappa_D^2(\mathbf{A})$, which is given below.
\begin{corollary}
The p.d.f. of $\kappa_D^2(\mathbf{A})$ is given by
\begin{align}
\label{pdfdemmel}
f^{(\alpha)}_{\kappa_D^2(\mathbf{A})}(y)=\frac{n!}{(n+\alpha-1)!}\frac{\Gamma(mn)}{y^{mn}}
\mathcal{L}^{-1}&\Biggl\{\frac{e^{-ns}}{s^{mn-\alpha-1}}\;
\text{\em det}\left[L^{(l+1)}_{n+k-l-1}(-s)\right]_{k,l=1,2,\ldots,\alpha}\Biggr\}
\end{align}
where $\mathcal{L}^{-1}(\cdot)$ denotes the inverse Laplace transform.
\end{corollary}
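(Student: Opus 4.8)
The key realization is that the moment generating function $\mathcal{M}_{\kappa_D^2(\mathbf{A})}(s)=E[e^{-s\kappa_D^2(\mathbf{A})}]$ is nothing but the Laplace transform of the sought density evaluated at $s$, so that $f^{(\alpha)}_{\kappa_D^2(\mathbf{A})}(y)=\mathcal{L}^{-1}\{\mathcal{M}_{\kappa_D^2(\mathbf{A})}\}(y)$. The plan is therefore to recast the integral representation (\ref{demgf}) into a form from which this inverse transform can be read off directly, rather than attempting to invert the inner $x$-integral term by term. To this end I would first isolate the $s$-dependent structure by defining
\begin{equation*}
\Phi(s):=\frac{e^{-ns}}{s^{mn-\alpha-1}}\det\left[L^{(l+1)}_{n+k-l-1}(-s)\right]_{k,l=1,2,\ldots,\alpha},
\end{equation*}
which is precisely the argument of $\mathcal{L}^{-1}$ appearing in the statement.

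The first substantive step is purely algebraic: rewriting (\ref{demgf}) in terms of $\Phi$. Replacing $s$ by $s+x$ in the definition of $\Phi$ shows that the ratio of the Laguerre determinant to $(s+x)^{mn-\alpha-1}$ in (\ref{demgf}) equals $e^{n(s+x)}\Phi(s+x)$. Substituting this back, the prefactor $e^{-ns}$ and the factor $e^{-nx}$ in the integrand cancel against $e^{n(s+x)}=e^{ns}e^{nx}$, leaving the clean form
\begin{equation*}
\mathcal{M}_{\kappa_D^2(\mathbf{A})}(s)=\frac{n!}{(m-1)!}\int_0^\infty x^{mn-1}\,\Phi(s+x)\,dx.
\end{equation*}
This cancellation of all the exponentials is the structural observation that makes the inversion tractable.

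Next I would write $\Phi$ itself as a Laplace transform, $\Phi(s)=\int_0^\infty e^{-sy}\phi(y)\,dy$ with $\phi:=\mathcal{L}^{-1}\{\Phi\}$, so that $\Phi(s+x)=\int_0^\infty e^{-(s+x)y}\phi(y)\,dy$. Inserting this and interchanging the order of the $x$- and $y$-integrations (by Fubini's theorem), the inner integral becomes the standard Gamma integral $\int_0^\infty x^{mn-1}e^{-xy}\,dx=\Gamma(mn)/y^{mn}$, whence
\begin{equation*}
\mathcal{M}_{\kappa_D^2(\mathbf{A})}(s)=\frac{n!\,\Gamma(mn)}{(m-1)!}\int_0^\infty e^{-sy}\,\frac{\phi(y)}{y^{mn}}\,dy.
\end{equation*}
Comparing this with $\mathcal{M}_{\kappa_D^2(\mathbf{A})}(s)=\int_0^\infty e^{-sy}f^{(\alpha)}_{\kappa_D^2(\mathbf{A})}(y)\,dy$ and invoking the injectivity of the Laplace transform yields $f^{(\alpha)}_{\kappa_D^2(\mathbf{A})}(y)=\frac{n!\,\Gamma(mn)}{(m-1)!}\,y^{-mn}\,\phi(y)$. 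The proof concludes by noting that $\alpha=m-n$ forces $(n+\alpha-1)!=(m-1)!$, reproducing (\ref{pdfdemmel}) exactly.

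The main obstacle I anticipate is not the algebra but the analytic justification of the two transform manipulations: one must verify that $\Phi$ lies in the image of the Laplace transform on a suitable right half-plane and that the resulting double integral is absolutely convergent, so that Fubini applies. Checking this amounts to controlling the growth of the Laguerre determinant $\det[L^{(l+1)}_{n+k-l-1}(-s)]$ as $s\to\infty$ together with the decay supplied by the factors $s^{-(mn-\alpha-1)}$ and $e^{-ns}$; once the abscissa of convergence is located, uniqueness of the inverse transform closes the argument.
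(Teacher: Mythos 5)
Your proposal is correct and follows the same route the paper (implicitly) takes: the corollary is obtained simply by recognizing the m.g.f.\ as the Laplace transform of the density, rewriting the integrand of (\ref{demgf}) as $x^{mn-1}\Phi(s+x)$, and using the shift/Fubini/Gamma-integral argument to pull out the factor $\Gamma(mn)/y^{mn}$. Your version is in fact more explicit than the paper's one-line justification, and the convergence issues you flag are harmless here since $\Phi$ is a finite sum of terms $e^{-ns}s^{-p}$ with $p\geq 1$, whose inverse transform is a piecewise polynomial supported on $[n,\infty)$.
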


Interestingly, by noting that the p.d.f. of the minimum eigenvalue
$\lambda_1$ takes the form
\begin{align}
\label{mineigpdf}
f_{\min}(x)&=\frac{K_{n,\alpha}}{(n-1)!}\int_{[x,\infty)^{n-1}}f\left(x,\lambda_2,\ldots,\lambda_n\right)
d\lambda_2\ldots d\lambda_n\nonumber\\
&=
\frac{K_{n,\alpha}}{(n-1)!}x^{\alpha}e^{-nx}(-1)^{\alpha(n-1)}Q(n-1,\alpha,-x)\nonumber\\
& =
\frac{n!}{(n+\alpha-1)!}x^{\alpha} e^{-nx}\text{det}\left[L^{(l+1)}_{n+k-l-1}(-x)\right]_{k,l=1,2,\ldots,\alpha},
\end{align}
we can obtain the following alternative representation for the
p.d.f. of $\kappa_D^2(\mathbf{A})$:
\begin{equation*}
\label{demalt}
f^{(\alpha)}_{\kappa_D^2(\mathbf{A})}(y)=\frac{\Gamma(mn)}{y^{mn}}\mathcal{L}^{-1}\left\{\frac{f_{\min}(s)}{s^{mn-1}}\right\}.
\end{equation*}
This turns out to be a simpler representation of an equivalent
relation given previously in \cite{Krish} (obtaining one relation
from the other, however, appears to be non-trivial). This
simplified form results as a consequence of the m.g.f.\ derivation
approach, in contrast to the p.d.f.-based approach in \cite{Davis}.
We also mention that the minimum eigenvalue p.d.f.\
(\ref{mineigpdf}) fixes a sign problem with a result given in
\cite[Eq. 3.12]{Peter}.

The new expression (\ref{pdfdemmel}) facilitates the exact
evaluation of the p.d.f.\ of $\kappa_D^2(\mathbf{A})$ in
closed-form, for any value of $\alpha$. This is given in the
following key theorem:
\begin{theorem} \label{eq:ThExact}
The exact p.d.f. of $\kappa_D^2(\mathbf{A})$ is given by
\begin{align}
\label{pdfalpha}
f^{(\alpha)}_{\kappa_D^2(\mathbf{A})}(y)&=
\Gamma(mn)
\left(\prod_{k=0}^\alpha
\frac{n+k}{(k+1)!}\right)(y-n)^{mn-\alpha-2}y^{-mn}\nonumber\\
& \hspace{0.8cm}\times
\sum_{j_1=0}^{n+\alpha-2}
\ldots
\sum_{j_\alpha=0}^{n-1}\;
\left(\prod_{k=1}^\alpha
(-1)^{j_k}
\frac{(-n-\alpha+k+1)_{j_k}}{(k+2)_{j_k}\; j_k!}
(y-n)^{-j_k}\right)\\
&\hspace{4cm} \times
\frac{\Delta_\alpha(\mathbf{c})}
{\Gamma\left(mn-\alpha-1-\sum_{k=1}^\alpha j_k \right)} H(y-n)\:,\nonumber
\end{align}
where $\mathbf{c}=\{c_1(j_1),c_2(j_2),\ldots, c_\alpha(j_\alpha)\}$
with $c_l(j_l)=l+j_l$, and $H(z)$ denote the Heaviside unit step
function, i.e., $H(z)=1,\;z\geq 0,$ and $H(z)=0,\; z<0.$
\end{theorem}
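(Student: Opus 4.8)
The plan is to start from the inverse-Laplace-transform representation of the density given in the Corollary, namely
\[
f^{(\alpha)}_{\kappa_D^2(\mathbf{A})}(y)=\frac{n!}{(n+\alpha-1)!}\frac{\Gamma(mn)}{y^{mn}}
\mathcal{L}^{-1}\Biggl\{\frac{e^{-ns}}{s^{mn-\alpha-1}}\;
\text{det}\left[L^{(l+1)}_{n+k-l-1}(-s)\right]_{k,l=1,2,\ldots,\alpha}\Biggr\},
\]
and to reduce everything to a single explicit inverse Laplace transform that can be inverted term by term. First I would expand the determinant of Laguerre polynomials. Using the series representation (\ref{prop1}) from Lemma, each entry $L^{(l+1)}_{n+k-l-1}(-s)$ becomes a finite polynomial in $s$ whose coefficients involve Pochhammer symbols; substituting these into the determinant and using multilinearity lets me pull the summations over the degree indices $j_1,\dots,j_\alpha$ outside. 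The key observation is that after factoring the $s$-dependent monomials $s^{j_k}$ out of each column, the remaining numerical determinant is a Vandermonde-type determinant in the shifted indices $c_l(j_l)=l+j_l$, which collapses to $\Delta_\alpha(\mathbf{c})$ up to the constant prefactors $\prod_{k=0}^\alpha\frac{n+k}{(k+1)!}$. This is exactly the combinatorial heart of the result.

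Second, once the determinant is expanded into a multiple sum of the form $\sum_{j_1,\dots,j_\alpha}(\text{constants})\,\Delta_\alpha(\mathbf{c})\,s^{\sum_k j_k}$, the bracketed quantity inside $\mathcal{L}^{-1}$ becomes a sum of terms of the type $e^{-ns}\,s^{\sum_k j_k-(mn-\alpha-1)}$. Each such term is a pure power of $s$ multiplied by the shift factor $e^{-ns}$, so I can invert it using the standard pair $\mathcal{L}^{-1}\{s^{-\beta}\}=t^{\beta-1}/\Gamma(\beta)$ together with the shift theorem $\mathcal{L}^{-1}\{e^{-ns}F(s)\}(y)=f(y-n)H(y-n)$. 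This is where the Heaviside factor $H(y-n)$ and the gamma function $\Gamma\!\left(mn-\alpha-1-\sum_k j_k\right)$ in the denominator arise, along with the power $(y-n)^{mn-\alpha-2-\sum_k j_k}$. Distributing this last power against the $(y-n)^{-j_k}$ factors reproduces the overall $(y-n)^{mn-\alpha-2}$ prefactor times the per-summand $(y-n)^{-j_k}$ contributions displayed in (\ref{pdfalpha}).

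Third, I would carefully track the index ranges and the sign bookkeeping. The upper limits of the sums, running from $n+\alpha-2$ down to $n-1$ across the $\alpha$ summation variables, come directly from the degrees $n+k-l-1$ of the Laguerre polynomials in the determinant together with the derivative/index structure (\ref{prop3}); I must confirm that the Pochhammer factors $(-n-\alpha+k+1)_{j_k}$ and $(k+2)_{j_k}$ match the coefficients in the series (\ref{prop1}) after the column reordering. The signs $(-1)^{j_k}$ are collected from evaluating the polynomials at the argument $-s$ and from the Laplace inversion of positive powers of $s$. Finally I would reconcile the constant $\frac{n!}{(n+\alpha-1)!}$ with the telescoping product $\prod_{k=0}^\alpha\frac{n+k}{(k+1)!}$ that emerges from extracting leading coefficients of the Laguerre series.

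The main obstacle I expect is the determinant expansion step: showing rigorously that after extracting the $s$-powers column by column, the surviving numerical determinant reduces cleanly to the Vandermonde $\Delta_\alpha(\mathbf{c})$ in the variables $c_l(j_l)=l+j_l$. This requires identifying precisely which factors are column-independent (and can be pulled out as the constant prefactor) versus which depend on the summation indices in a way that feeds the Vandermonde structure, and it demands care to ensure that apparent multiple contributions with coinciding indices vanish because $\Delta_\alpha$ is antisymmetric. The term-by-term inverse Laplace transform, by contrast, is routine once the $s$-dependence has been isolated into clean monomials.
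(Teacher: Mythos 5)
Your overall route is the same as the paper's: start from the inverse-Laplace representation (\ref{pdfdemmel}), expand the Laguerre determinant via the series (\ref{prop1}), pull the summations outside by multilinearity, reduce the surviving numerical determinant to $\Delta_\alpha(\mathbf{c})$, and finish with a term-by-term shift-theorem inversion. The Laplace-inversion half of your plan is indeed routine and matches the paper exactly, including the origin of $H(y-n)$, the gamma factor, and the powers of $(y-n)$.

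The gap is in the step you yourself flag as the main obstacle, and it is genuine: as literally described, "factoring the $s$-dependent monomials out of each column" does not work, because the entry in row $k$, column $l$ is a polynomial of degree $n+k-l-1$ in $s$, so the summation range depends on the \emph{row} index $k$ and column multilinearity cannot be applied directly. The paper's fix is a specific factorization: since $(-n-k+l+1)_{j_l}$ vanishes for $j_l>n+k-l-1$, every column's sum can be extended to the common limit $n+\alpha-l-1$, and the residual $k$-dependence is isolated as the falling-factorial product $\prod_{i=0}^{\alpha-k-1}(\tilde{c}_l-i)$ with $\tilde{c}_l=n+\alpha-1-j_l-l$ (see the derivation of (\ref{eqdet})). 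Moreover, the determinant that survives is \emph{not} a Vandermonde in $c_l(j_l)=l+j_l$ on its face --- its entries are falling factorials of $\tilde{c}_l$ --- and reducing it to $\Delta_\alpha(\mathbf{c})$ requires the Stirling-number row operations of Lemma \ref{le:App} (via Lemma \ref{lem:stirling}) plus the sign bookkeeping in passing from $\tilde{c}_l$ to $c_l$. These two devices --- the range extension/factorization and the Stirling-number reduction --- are the ideas your proposal is missing; without them the "Vandermonde collapse" you assert cannot be carried out.
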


\begin{proof}
We use (\ref{prop1}) to  write the determinant term in (\ref{pdfdemmel}) as
\begin{align}
\label{eq:rev}
&\text{det}\left[L^{(l+1)}_{n+k-l-1}(-s)\right]_{k,l=1,2,\ldots,\alpha}\nonumber\\
&= \prod_{k=1}^\alpha \frac{(n+k)!}{(k+1)!}
\text{det}\left[\frac{1}{(n+k-l-1)!}\sum_{j_l=0}^{n+k-l-1}\frac{(-n-k+l+1)_{j_l}}
{(l+2)_{j_l}
}\frac{(-s)^{j_l}}{j_l!}\right]_{k,l=1,2,\ldots,\alpha}.
\end{align}
Further manipulation in this form is difficult due to the dependence
of the summation upper limits on $k$ and $l$. To circumvent this
problem, we use the factorization
\begin{align}
\frac{(-n-k+l+1)_{j_l}} {(l+2)_{j_l} } &=
\frac{(-n-k+l+1)_{j_l}}{(-n-\alpha+l+1)_{j_l}}
\frac{(-n-\alpha+l+1)_{j_l}}{(l+2)_{j_l} } \nonumber \\
&= \frac{(n+k-l-1)!}{(n+\alpha-l-1)!}
\frac{(-n-\alpha+l+1)_{j_l}}{(l+2)_{j_l} }
\prod_{i=0}^{\alpha-k-1}(\tilde{c}_l-i)\nonumber
\end{align}
where $\tilde{c}_l=n+\alpha-1-j_l-l$, in (\ref{eq:rev}) with some algebraic manipulation to obtain
\begin{align}
\label{eqdet} &
\text{det}\left[L^{(l+1)}_{n+k-l-1}(-s)\right]_{k,l=1,2,\ldots,\alpha}
=\frac{(n+\alpha)!(n+\alpha-1)!}{n!
\;(n-1)!\prod_{k=1}^\alpha (k+1)!} \nonumber\\
& \times \sum_{j_1=0}^{n+\alpha-2} \ldots \sum_{j_\alpha=0}^{n-1}
\left(\prod_{k=1}^\alpha \frac{(-n-\alpha+k+1)_{j_k}}{(k+2)_{j_k}\;
j_k!} (-s)^{j_k}\right)
\text{det}\left[\prod_{i=0}^{\alpha-k-1}(\tilde{c}_l-i)\right]_{k,l=1,2,\ldots,\alpha}.
\end{align}
Now, invoking Lemma \ref{le:App} in the Appendix, then substituting
(\ref{eqdet}) into (\ref{pdfdemmel}) yields
\begin{align*}
f^{(\alpha)}_{\kappa_D^2(\mathbf{A})}(y)=
\Gamma(mn)
\left(\prod_{k=0}^\alpha
\frac{n+k}{(k+1)!}\right)y^{-mn}\;
&\sum_{j_1=0}^{n+\alpha-2}
\ldots
\sum_{j_\alpha=0}^{n-1}\;
\left(\prod_{k=1}^\alpha
(-1)^{j_k}
\frac{(-n-\alpha+k+1)_{j_k}}{(k+2)_{j_k}\; j_k!}
\right)\nonumber\\
& \times \Delta_\alpha(\mathbf{c})
\mathcal{L}^{-1}\left\{\frac{e^{-ns}}{s^{mn-1-\alpha-\sum_{k=1}^\alpha
j_k}}\right\}.
\end{align*}
Finally, the result (\ref{pdfalpha}) follows upon carrying out the
remaining Laplace inversion using \cite[Eq. 1.1.2.1]{Prud2}.
\end{proof}

For some small values of $\alpha$, (\ref{pdfalpha}) admits the following simple forms.
\begin{corollary}\label{cor:2}
The exact p.d.f.s of $\kappa_D^2(\mathbf{A})$ corresponding to
$\alpha=0$ and $\alpha=1$ are given, respectively, by
\begin{align*}
f_{\kappa_D^2(\mathbf{A})}^{(0)}(y)&=n(n^2-1)(y-n)^{n^2-2}y^{-n^2}H(y-n)\:,\\
f_{\kappa_D^2(\mathbf{A})}^{(1)}(y)&=
\frac{\Gamma(n(n+1))}{2!}n(n+1)(y-n)^{n(n+1)-3}y^{-n(n+1)}\\
&
\hspace{2.7cm}\times\sum_{i=0}^{n-1}(-1)^i
\frac{(-n+1)_i}{(3)_i\;i!}\frac{(y-n)^{-i}}{\Gamma(n(n+1)-i-2)}H(y-n).
\end{align*}
\end{corollary}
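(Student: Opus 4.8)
The plan is to obtain both closed forms by directly specializing the general expression (\ref{pdfalpha}), since each case collapses its nested structure almost entirely. The only care needed is in the bookkeeping of the empty and short products, sums, and Vandermonde factors, together with the Gamma-ratio simplifications; there is no genuine analytic difficulty, so this is a verification rather than a derivation.

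For $\alpha=0$ (equivalently $m=n$, so $mn=n^2$), I would first note that the $\alpha$-fold nested summation in (\ref{pdfalpha}) carries no summation index, so it reduces to a single empty term equal to $1$; likewise $\Delta_0(\mathbf{c})=1$ as the empty Vandermonde, and the vacuous sum $\sum_{k=1}^{0}j_k=0$ makes the Gamma denominator $\Gamma(mn-\alpha-1)=\Gamma(n^2-1)$. The prefactor product is $\prod_{k=0}^{0}\frac{n+k}{(k+1)!}=n$, and the power becomes $(y-n)^{mn-\alpha-2}=(y-n)^{n^2-2}$. Combining these with $\Gamma(n^2)/\Gamma(n^2-1)=n^2-1$ then yields $f^{(0)}_{\kappa_D^2(\mathbf{A})}(y)=n(n^2-1)(y-n)^{n^2-2}y^{-n^2}H(y-n)$.

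For $\alpha=1$ (equivalently $m=n+1$, so $mn=n(n+1)$), the nested sum degenerates to a single sum, which I would index by $i:=j_1$ running from $0$ to $n+\alpha-2=n-1$, and again $\Delta_1(\mathbf{c})=1$ since the argument vector $\mathbf{c}$ then has a single entry. The prefactor product is $\prod_{k=0}^{1}\frac{n+k}{(k+1)!}=\frac{n(n+1)}{2!}$, and the single-factor $k$-product evaluates via $(-n-\alpha+k+1)_{j_k}\big|_{k=1}=(-n+1)_i$ and $(k+2)_{j_k}\big|_{k=1}=(3)_i$. The power prefactor is $(y-n)^{mn-\alpha-2}=(y-n)^{n(n+1)-3}$, while the Gamma denominator is $\Gamma(mn-\alpha-1-i)=\Gamma(n(n+1)-i-2)$. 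Assembling these pieces reproduces the stated expression for $f^{(1)}_{\kappa_D^2(\mathbf{A})}(y)$ verbatim.

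The main, and only mild, obstacle is keeping the index conventions consistent: interpreting the nested sum and the Vandermonde $\Delta_\alpha(\mathbf{c})$ correctly when $\alpha\in\{0,1\}$, and confirming that the upper limit $n+\alpha-2$ of the leading sum coincides with the limit $n-1$ of the trailing sum once a single index remains. Once these conventions are fixed, both results follow from elementary Pochhammer and Gamma-function identities with no further computation required.
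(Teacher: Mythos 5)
Your proposal is correct and follows exactly the route the paper intends: the corollary is stated without a separate proof as a direct specialization of (\ref{pdfalpha}), and your bookkeeping of the empty/singleton sums and products, the trivial Vandermonde factors $\Delta_0(\mathbf{c})=\Delta_1(\mathbf{c})=1$, and the Gamma-ratio $\Gamma(n^2)/\Gamma(n^2-1)=n^2-1$ is precisely what is required. The Pochhammer specializations $(-n-\alpha+k+1)_{j_k}\big|_{\alpha=k=1}=(-n+1)_i$ and $(k+2)_{j_k}\big|_{k=1}=(3)_i$ and the summation limit $n+\alpha-2=n-1$ all check out.
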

The expression for $\alpha=0$ agrees with a previous result given in
\cite{Ed3}.

\begin{rem}
Whilst previous equivalent expressions have been derived in
\cite{Caj,Mat,Lu}, the exact p.d.f. of $\kappa_D^2(\mathbf{A})$
given in (\ref{pdfalpha}) is a generalized and/or simpler
representation. Indeed, noting that the number of nested summations
depends only on $\alpha$, this formula provides an efficient way of
evaluating the p.d.f. of $\kappa_D^2(\mathbf{A})$, particularly for
small values of $\alpha$. Moreover, since the algebraic complexity
depends only on $n$ and the difference of $m$ and $n$, this in turn makes our result (\ref{pdfalpha}) very
useful for conducting an asymptotic analysis of
$\kappa_D^2(\mathbf{A})$ as $m$ and $n$ grow large, but their difference does not (something which
appears infeasible with previous expressions in \cite{Caj,Mat,Lu}).
This is the objective of the next section.
\end{rem}

\section{Asymptotic Characterization of $\kappa_D^2(\mathbf{A})$}
In this section, we employ the exact p.d.f.\ representation
(\ref{pdfalpha}) to investigate the distribution of
$\kappa_D^2(\mathbf{A})$, suitably scaled, for fixed $\alpha$ when
$m,n\to \infty$.  We have the following key result:
\begin{theorem}
\label{th:asylam1}
As $m$ and $n$ tend to $\infty$ such that $\alpha=m-n$ is fixed,
$\kappa_D^2(\mathbf{A})$ scales on the order of $n^3$. More
specifically, the scaled random variable
$V=\kappa_D^2(\mathbf{A})/\left({\mu n^3}\right)$, with $\mu\in\mathbb{R}^+$ an arbitrary constant, has the following asymptotic p.d.f.\
as $m$ and $n$ tend to $\infty$ with $\alpha=m-n$ fixed:
\begin{align} \label{eq:Th1Result}
f^{(\alpha)}_V(v) =\frac{e^{-\frac{1}{\mu v}}}{\mu v^2}&\;
\text{\em det}\left[
\sum_{i=0}^{k-1}
\sum_{j=0}^i
\mathtt{S}^{(j)}_i\binom{k-1}{i}l^{k-1-i}
\frac{I_{l+j+1}\left(\frac{2}{\sqrt{\mu v}}\right)}{(\mu v)^{\frac{j+1-l}{2}}}
\right]_{k,l=1,2,\ldots,\alpha}\hspace{-5mm}H(v)\:,
\end{align}
where
$$
I_n(z)=
\sum_{k=0}^{\infty}\;\frac{1}{k!(n+k)!}\;\left(\frac{z}{2}\right)^{n+2k}
$$
is the modified Bessel function of the first kind of order $n$.
\end{theorem}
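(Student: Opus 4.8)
The plan is to start from the exact density (\ref{pdfalpha}), perform the change of variables $y=\mu n^3 v$ so that $f^{(\alpha)}_V(v)=\mu n^3 f^{(\alpha)}_{\kappa_D^2(\mathbf{A})}(\mu n^3 v)$, and then extract the leading behaviour as $n\to\infty$ with $\alpha=m-n$ fixed. First I would treat the algebraic prefactor $(y-n)^{mn-\alpha-2}y^{-mn}$: writing it as $y^{-\alpha-2}\bigl(1-n/y\bigr)^{mn-\alpha-2}$ and using $n/y=1/(\mu n^2 v)$ together with $mn=n^2+\alpha n$ gives $(\mu n^3 v)^{-\alpha-2}\bigl(1-\tfrac{1}{\mu n^2 v}\bigr)^{mn-\alpha-2}\to(\mu n^3 v)^{-\alpha-2}e^{-1/(\mu v)}$, which already produces the exponential factor in (\ref{eq:Th1Result}).

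The central structural step is to collapse the $\alpha$-fold summation in (\ref{pdfalpha}) into a single determinant. Since $\Delta_\alpha(\mathbf{c})=\det\bigl[(l+j_l)^{k-1}\bigr]_{k,l=1,\dots,\alpha}$ is a Vandermonde determinant whose $l$-th column depends only on the summation variable $j_l$, and since the product $\prod_k(\cdot)$ factorizes across these variables, multilinearity of the determinant in its columns lets me pull each sum inside a column. The obstruction to doing this directly is the coupling factor $1/\Gamma(mn-\alpha-1-\sum_k j_k)$, which does not factorize. I would resolve this by passing to the limit term by term: for each fixed multi-index the ratio $\Gamma(mn)/\Gamma(mn-\alpha-1-\sum_k j_k)\sim(mn)^{\alpha+1+\sum_k j_k}$, and the factor $(mn)^{\sum_k j_k}=\prod_k(mn)^{j_k}$ now does factorize across columns. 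Combining $(mn)^{j_k}\sim n^{2j_k}$ with $(-n-\alpha+k+1)_{j_k}\sim(-1)^{j_k}n^{j_k}$ and $(y-n)^{-j_k}\sim(\mu n^3 v)^{-j_k}$ shows that all powers of $n$ cancel inside each column, leaving the convergent column sum $\sum_{t\ge0}\frac{(\mu v)^{-t}}{(l+2)_t\,t!}(l+t)^{k-1}$.

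It then remains to identify this series with the Bessel-function entries of (\ref{eq:Th1Result}). Expanding $(l+t)^{k-1}=\sum_{i=0}^{k-1}\binom{k-1}{i}l^{k-1-i}t^i$ and converting each monomial $t^i$ into falling factorials via Lemma \ref{lem:stirling} reduces the $t$-sum to $\sum_{t\ge p}(\mu v)^{-t}/\bigl[(l+2)_t(t-p)!\bigr]$; rewriting $(l+2)_t=(l+1+t)!/(l+1)!$ and shifting $t=p+r$ matches the defining series of $I_{l+p+1}\bigl(2/\sqrt{\mu v}\bigr)$ up to an explicit power of $\mu v$ and the constant $(l+1)!$. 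Finally I would assemble all prefactors: the Jacobian $\mu n^3$, the surviving $(mn)^{\alpha+1}\sim n^{2\alpha+2}$, the factor $\prod_{k=0}^\alpha(n+k)/(k+1)!\sim n^{\alpha+1}/\prod_k(k+1)!$, and the column constants $\prod_l(l+1)!(\mu v)$ extracted from the determinant; the factorials cancel, the net power of $n$ is exactly zero (confirming the $n^3$ scaling, since $n^{-3}$ from the density cancels the Jacobian $n^3$), and the remaining scalars collapse to $e^{-1/(\mu v)}/(\mu v^2)$, yielding (\ref{eq:Th1Result}).

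The hardest part will be justifying the interchange of the limit $n\to\infty$ with the (eventually infinite) summations. The fixed-index asymptotics of the Gamma ratio and of the Pochhammer symbols hold only for bounded $\sum_k j_k$, whereas the summation ranges grow like $n$; I would therefore supply uniform tail bounds, such as $\Gamma(mn)/\Gamma(mn-p)\le(mn)^p$ and $|(-n-\alpha+k+1)_{j}|=(n+\alpha-k-1)!/(n+\alpha-k-1-j)!$, to dominate the summand by a summable, $n$-independent envelope and invoke dominated convergence, after which the multilinearity collapse and the Bessel identification are exact algebra.
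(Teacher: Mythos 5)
Your proposal follows essentially the same route as the paper's proof: scale $y=\mu n^3 v$ so that $(1-n/y)^{mn}\to e^{-1/(\mu v)}$, pass to the limit term by term so that the powers of $n$ from the Gamma ratio, the Pochhammer symbols and $(y-n)^{-j_k}$ cancel, collapse the resulting $\alpha$-fold infinite sum against the Vandermonde $\Delta_\alpha(\mathbf{c})$ into a determinant of single series, and identify each column series with $I_{l+j+1}(2/\sqrt{\mu v})$ via the binomial theorem and Lemma \ref{lem:stirling}. Your explicit attention to justifying the limit--sum interchange by dominated convergence is a point the paper dispatches with ``elementary limiting arguments,'' but it does not change the method.
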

\begin{proof}
We arrange the terms in (\ref{pdfalpha}), noting that $m=n+\alpha$, to obtain
\begin{align}
\label{pdfdemasy}
& f_{\kappa_D^2(\mathbf{A})}^{(\alpha)}(y)=
\Gamma(n(n+\alpha))
\left(\prod_{k=0}^\alpha
\frac{n+k}{(k+1)!}\right)\;
\left(1-\frac{n}{y}\right)^{n(n+\alpha)-\alpha-2}y^{-\alpha-2}\nonumber\\
& \qquad \quad  \times
\sum_{j_1=0}^{n+\alpha-2}
\ldots
\sum_{j_\alpha=0}^{n-1}\;
\left(\prod_{k=1}^\alpha
(-1)^{j_k}
\frac{(-n-\alpha+k+1)_{j_k}}{(k+2)_{j_k}\; j_k!}
\left(1-\frac{n}{y}\right)^{-j_k}y^{-j_k}\right)\nonumber\\
& \hspace{4.5cm} \times
\frac{\Delta_\alpha(\mathbf{c})}
{\Gamma\left(n(n+\alpha)-\alpha-1-\sum_{k=1}^\alpha j_k \right)} H(y-n).
\end{align}
Now we have to choose a suitable scaling for the variable
$\kappa_D^2(\mathbf{A})$ in terms of $n$ so that the above p.d.f.
converges as $n\to\infty$. Careful thought reveals that the term
$\left(1-\frac{n}{y}\right)^{n(n+\alpha)}$ converges to a finite
non-zero limit as $n\to\infty$ if we scale $\kappa_D^2(\mathbf{A})$
proportional to $n^3$. For this reason, we introduce the scaled
random variable $V=\kappa_D^2(\mathbf{A})/(\mu n^3)$, and focus on
the function $\mu n^3 f^{(3)}_{\mu n^3 V}(\mu n^3 v)$ as
$n\to\infty$. To this end, with $y=\mu n^3 v$, by using elementary
limiting arguments it can be shown that
\begin{align}
\label{sterdetexp} & \lim_{n\to\infty}\mu n^3 f^{(\alpha)}_{\mu n^3
V}(\mu n^3 v)
 \nonumber \\
 & \hspace*{1cm} =\left(\prod_{k=1}^\alpha\frac{1}{ (k+1)!}\right)\frac{e^{-\frac{1}{\mu v}}}{\mu^{\alpha+1}v^{\alpha+2}}
\sum_{j_1=0}^{\infty}
\ldots
\sum_{j_\alpha=0}^{\infty}\;
\left(\prod_{k=1}^\alpha
\frac{1}{(k+2)_{j_k}\; j_k!}
\frac{1}{(\mu v)^{j_k}}\right)
\Delta_\alpha(\mathbf{c}) H(v)\nonumber\\
& \hspace*{1cm}  = \left(\prod_{k=1}^\alpha\frac{1}{
(k+1)!}\right)\frac{e^{-\frac{1}{\mu
v}}}{\mu^{\alpha+1}v^{\alpha+2}} \text{det}\left[\sum_{j_l=0}^\infty
\frac{(l+j_l)^{k-1}}{(l+2)_{j_l}\; j_l!}\frac{1}{(\mu v)^{j_l}}
\right]_{k,l=1,2,\ldots,\alpha}H(v).
\end{align}
We now focus on simplifying the determinant. To this end, we use the
binomial theorem and the definition of the Stirling number
(\ref{ster1}) to arrive at
\begin{align*}
\text{det}&\left[\sum_{j_l=0}^\infty
\frac{(l+j_l)^{k-1}}{(l+2)_{j_l}\; j_l!}\frac{1}{(\mu v)^{j_l}}
\right]_{k,l=1,2,\ldots,\alpha}\nonumber\\
& = \text{det}\left[ \sum_{i=0}^{k-1}\sum_{j=0}^i
\binom{k-1}{i} l^{k-1-i} \mathtt{S}^{(j)}_i \right. \left.
\sum_{j_l=j}^\infty \frac{1}{(j_l-j)!(l+2)_{j_l}}\frac{1}{(\mu
v)^{j_l}} \right]_{k,l=1,2,\ldots,\alpha},
\end{align*}
which can be simplified upon noting the relations \cite{Watson}
\begin{align*}
\sum_{j_l=j}^\infty
\frac{1}{(j_l-j)!(l+2)_{j_l}}\frac{1}{(\mu v)^{j_l}}& =
\left(\frac{1}{\mu v}\right)^j
\frac{(l+1)!}{(l+j+1)!}\;
{}_0 F_1\left(-;l+j+2;\frac{1}{\mu v}\right),\\
{}_0 F_1\left(-;l+j+2;\frac{1}{\mu v}\right)& =(l+j+1)!(\mu v)^{\frac{l+j+1}{2}} I_{l+j+1}\left(\frac{2}{\sqrt{\mu v}}\right),
\end{align*}
with ${}_0 F_1\left(-;\rho;z\right)=\sum_{k=0}^\infty z^k/(\rho)_k k!$ denoting the generalized hypergeometric function,
to obtain
\begin{align}
&\text{det}\left[\sum_{j_l=0}^\infty
\frac{(l+j_l)^{k-1}}{(l+2)_{j_l}\; j_l!}\frac{1}{(\mu v)^{j_l}}
\right]_{k,l=1,2,\ldots,\alpha}\nonumber\\
& =\prod_{k=1}^\alpha (k+1)!\; \text{det}\left[
\sum_{i=0}^{k-1}\sum_{j=0}^i \binom{k-1}{i} l^{k-1-i}
\mathtt{S}^{(j)}_i\right. (\mu v)^{\frac{l-j+1}{2}}
I_{l+j+1}\left(\frac{2}{\sqrt{\mu v}}\right)
\Biggr]_{k,l=1,2,\ldots,\alpha}. \nonumber
\end{align}
Finally, using this result in (\ref{sterdetexp}) with some algebraic
manipulation, and noting that $\mu n^3 f^{(\alpha)}_{\mu n^3 V}(\mu
n^3 v)$ denotes the p.d.f. of the new variable $V$, concludes the
proof.
\end{proof}

\begin{rem}
Clearly, the above approach of deriving the p.d.f. of the asymptotic
scaled version of $\kappa_D^2(\mathbf{A})$ explicitly depends on the
availability of a closed-form expression for the p.d.f. of
$\kappa_D^2(\mathbf{A})$. Interestingly, one can directly manipulate
the m.g.f. of $\kappa_D^2(\mathbf{A})$ instead of the p.d.f. to
yield the same asymptotic density. Although we do not demonstrate it
here, we exploit this technique in deriving the scaled asymptotic
p.d.f. of $\kappa_E^2(\mathbf{A})$ in section 6.
\end{rem}

The exact asymptotic p.d.f. of the scaled
$\kappa_D^2(\mathbf{A})/(\mu n^3)$ takes the following simple forms
for small values of $\alpha$.

\begin{corollary}\label{prop2}

For $\alpha=0, 1, 2$, the result (\ref{eq:Th1Result}) becomes:
\begin{align*}
f^{(0)}_V(v)& =\frac{1}{\mu v^2} e^{-\frac{1}{\mu v}}H(v) , \quad
\quad \quad f^{(1)}_V(v)  =\frac{1}{\mu v^2} e^{-\frac{1}{\mu v}}I_{2}\left(\frac{2}{\sqrt{\mu v}}\right)H(v)\\
f^{(2)}_V(v)& =\frac{1}{\mu v^2} e^{-\frac{1}{\mu v}}\left\{I_{2}\left(\frac{2}{\sqrt{\mu v}}\right)
I_{4}\left(\frac{2}{\sqrt{\mu v}}\right)
-\left[I_{3}\left(\frac{2}{\sqrt{\mu v}}\right)\right]^2
+\sqrt{\mu v}I_{2}\left(\frac{2}{\sqrt{\mu v}}\right) I_{3}\left(\frac{2}{\sqrt{\mu v}}\right)\right\}H(v).
\end{align*}
\end{corollary}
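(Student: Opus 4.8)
The plan is to obtain all three cases by directly specializing the determinantal formula (\ref{eq:Th1Result}) of Theorem \ref{th:asylam1}, exploiting that for $\alpha\le 2$ both the determinant and the inner double sum over $(i,j)$ collapse to a handful of terms. The only external ingredients required are the Stirling values entering (\ref{eq:Th1Result}) through Lemma \ref{lem:stirling}, namely $\mathtt{S}^{(0)}_0=1$, $\mathtt{S}^{(1)}_1=1$, and $\mathtt{S}^{(0)}_1=0$; this last vanishing is precisely what eliminates the only potentially awkward terms in the $\alpha=2$ case.

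The cases $\alpha=0,1$ are essentially immediate. For $\alpha=0$ the matrix in (\ref{eq:Th1Result}) is empty ($0\times 0$), so its determinant is $1$ by the usual convention, and the prefactor $e^{-1/(\mu v)}/(\mu v^2)$ is exactly the claimed $f^{(0)}_V(v)$. For $\alpha=1$ the determinant reduces to the single entry at $k=l=1$, whose double sum retains only its $i=j=0$ term; with $\mathtt{S}^{(0)}_0=1$ and the exponent $(\mu v)^{(l-j-1)/2}=(\mu v)^0$, that entry equals $I_2(2/\sqrt{\mu v})$, yielding the stated $f^{(1)}_V$.

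The only genuine computation is $\alpha=2$, where I would assemble the $2\times 2$ matrix by evaluating its four entries from (\ref{eq:Th1Result}), tracking carefully the power $(\mu v)^{-(j+1-l)/2}$ attached to each Bessel factor $I_{l+j+1}(2/\sqrt{\mu v})$. With all Bessel functions evaluated at $2/\sqrt{\mu v}$, the entries come out to $a_{1,1}=I_2$, $a_{1,2}=\sqrt{\mu v}\,I_3$, $a_{2,1}=I_2+I_3/\sqrt{\mu v}$, and $a_{2,2}=2\sqrt{\mu v}\,I_3+I_4$, where the vanishing of $\mathtt{S}^{(0)}_1$ is exactly what removes a spurious $I_2$ contribution to $a_{2,1}$ and a spurious $I_3$ contribution to $a_{2,2}$. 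Expanding $a_{1,1}a_{2,2}-a_{1,2}a_{2,1}$ then gives $I_2I_4-I_3^2+2\sqrt{\mu v}\,I_2I_3-\sqrt{\mu v}\,I_2I_3$, and the two cross terms combine to the single $+\sqrt{\mu v}\,I_2I_3$ of the claimed $f^{(2)}_V$, with the common prefactor $e^{-1/(\mu v)}/(\mu v^2)$ restored.

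The main difficulty here is bookkeeping rather than mathematics: keeping the half-integer exponents of $\mu v$ correct in each matrix entry, applying the three Stirling values in the right slots, and correctly collapsing the two $\sqrt{\mu v}\,I_2I_3$ terms in the $2\times 2$ expansion. The $\alpha=0$ case additionally relies on adopting the empty-determinant convention, which I would state explicitly so that the formula (\ref{eq:Th1Result}) is unambiguous at $\alpha=0$.
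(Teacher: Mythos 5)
Your proposal is correct and is exactly the route the paper intends: Corollary \ref{prop2} is stated as a direct specialization of (\ref{eq:Th1Result}), and your four $2\times 2$ entries, the use of $\mathtt{S}^{(0)}_1=0$ to kill the $i=1,\,j=0$ terms, and the final expansion $I_2I_4-I_3^2+\sqrt{\mu v}\,I_2I_3$ all check out. The empty-determinant convention for $\alpha=0$ is likewise the one the paper implicitly uses.
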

The asymptotic p.d.f. corresponding to the case $\alpha=0$ and $\mu=1/4$ is given in \cite{Ed3}

The advantage of the asymptotic formula given in Theorem
\ref{th:asylam1} is that it provides an easy to use expression which
compares favorably with finite $n$ results. To further highlight this fact, in Fig. \ref{Fig1}, we compare the analytical asymptotic p.d.f. derived in Theorem \ref{th:asylam1} with simulated data points corresponding to $\alpha=1, n=50,\mu=4$ and $\alpha=2,n=50,\mu=4$.

\begin{figure}
 \centering
 \vspace*{1.0cm}
     \subfigure[$\alpha=1$]{
            \includegraphics[width=.7\textwidth]{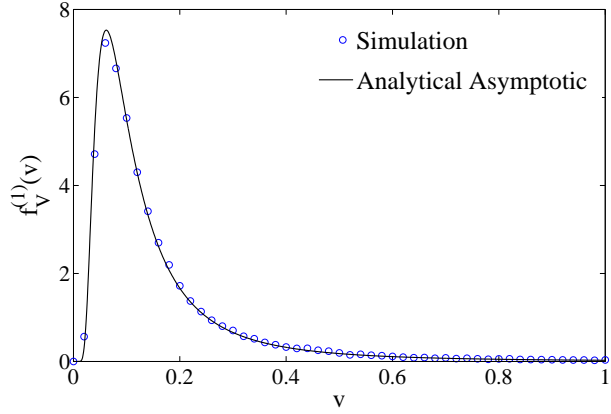}}
     \hspace{.3in}
     \subfigure[$\alpha=2$]{
          \includegraphics[width=.7\textwidth]{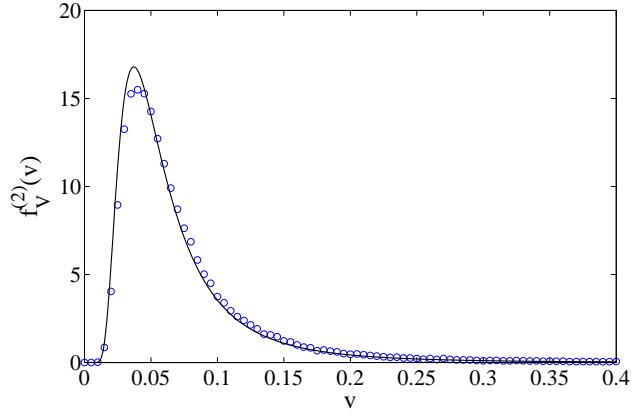}}\\
 \caption{Comparison of simulated data points and the analytical p.d.f.\ $f_V^{(\alpha)}(v)$ for $n=50$ with $\mu=4$.}
 \vspace*{0.3cm}
 \label{Fig1}
\end{figure}

Having characterized $\kappa_D^2(\mathbf{A})$, we now focus on
$\kappa_E^2(\mathbf{A})$.

\section{M.g.f. and p.d.f. of $\kappa_E^2(\mathbf{A})$}
Here we give new expressions for the m.g.f and the p.d.f. of
$\kappa_E^2(\mathbf{A})$. We point out that the key derivation steps
developed in this section may also be useful for characterizing the
distributional properties of more general condition number metrics,
given in \cite{Wal1,Wal2,Krish}.

Let $\lambda_2=x.$ By definition, the m.g.f. of $\kappa_E^2(\mathbf{A})$
for $n\geq 3$, is
\begin{align}
\label{lam2mgfdef}
\mathcal{M}_{\kappa_E^2(\mathbf{A})}(s)=e^{-s}\int_0^\infty
\Biggl\{\int_{\mathcal{R}_2}&\Biggl\{
\int_0^x
e^{-\frac{\lambda_1}{x}s-\frac{\sum_{j=3}^n\lambda_j}{x}s}
f\left(\lambda_1,x,\ldots,\lambda_n\right)\;d\lambda_1\Biggr\} d\lambda_3 d\lambda_4\ldots
d\lambda_n\Biggr\}dx,
\end{align}
where $\mathcal{R}_2=\{x\leq \lambda_3\leq \lambda_4 \leq \ldots
\leq \lambda_n\}$.  Analogous to the result given in (\ref{demgf}),
the following theorem provides an exact simple solution for this
m.g.f.:
\begin{theorem}
The m.g.f. of $\kappa_E^2(\mathbf{A})$, for $n\geq 3$, is given by
\begin{align}
\label{exactmgf}
\mathcal{M}_{\kappa_E^2(\mathbf{A})}(s) &=e^{-s(n-1)}\int_0^\infty
\frac{e^{-(n-1)x} x^{mn-1}}{(x+s)^{mn-4}}
\Biggl\{\int_0^1
\frac{z^2
 e^{-(1-z)\left(s+x\right)}}{(1-z)^{\alpha}}\\
 & \quad \times \det\left[L^{(j+1)}_{n+i-j-2}(-z(x+s))\;\; L^{(k-1)}_{n+i-k}(-(x+s))\right]_{\substack{i=1,2,\ldots,\alpha+2\\
 j=1,2\\
 k=3,4,\ldots,\alpha+2}}dz\Biggr\}dx\nonumber.
\end{align}
\end{theorem}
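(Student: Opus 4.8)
The plan is to mirror the derivation of Theorem~\ref{p1}, the only essential new feature being that the distinguished eigenvalue $\lambda_2=x$ splits the remaining $n-1$ integration variables into a single variable $\lambda_1\in[0,x]$ and $n-2$ variables $\lambda_3,\ldots,\lambda_n\in[x,\infty)$. First I would substitute the joint density (\ref{wpdf}) into (\ref{lam2mgfdef}) and factor the Vandermonde about the fixed point $x$, writing $\Delta_n^2(\boldsymbol{\lambda})=(x-\lambda_1)^2\prod_{j=3}^n(\lambda_j-x)^2\,\Delta_{n-1}^2(\lambda_1,\lambda_3,\ldots,\lambda_n)$. The key observation is that, since $(x-\lambda_1)^2=(\lambda_1-x)^2$, \emph{all} $n-1$ free variables carry the same weight $w(t)=t^\alpha(t-x)^2e^{-t(1+s/x)}$ against the symmetric kernel $\Delta_{n-1}^2$; the integrand is therefore fully symmetric in $\{\lambda_1,\lambda_3,\ldots,\lambda_n\}$, and only the region of integration is asymmetric. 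Removing the ordering among $\lambda_3,\ldots,\lambda_n$ by dividing through by $(n-2)!$ reduces the inner object to $\int_0^x w(\lambda_1)\,\big[\int_{[x,\infty)^{n-2}}\Delta_{n-1}^2\prod_{\mathrm{bulk}}w(x_i)\,d\mathbf{x}\big]\,d\lambda_1$.

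Next I would introduce the two substitutions suggested by the target formula: $\lambda_1=x(1-z)$ with $z\in[0,1]$ (Jacobian $x\,dz$), and the bulk rescaling $y_i=\tfrac{x+s}{x}(x_i-x)$, $y_i\in[0,\infty)$, exactly as used for $\kappa_D^2(\mathbf{A})$. The first produces the factors $z^2$ (from $(x-\lambda_1)^2$), $e^{-(1-z)(x+s)}$ (from $e^{-\lambda_1(1+s/x)}$), and $(1-z)^\alpha$ (from $\lambda_1^\alpha$). The second turns the bulk weight into $y^2e^{-y}$, converts the bulk exponentials into $e^{-(n-2)(x+s)}$ (which combines with the prefactor $e^{-s}e^{-x}$ to give precisely $e^{-s(n-1)}e^{-(n-1)x}$), and splits the remaining polynomial dependence into two pieces: $x_i^\alpha\propto(y_i+(x+s))^\alpha$ and the coupling $\prod_i(x_i-\lambda_1)^2\propto\prod_i(y_i+z(x+s))^2$. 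Hence, for each fixed $z$, the bulk integral becomes a Mehta-type integral $\int_{[0,\infty)^{n-2}}\Delta_{n-2}^2(\mathbf{y})\prod_i\big[y_i^2e^{-y_i}(y_i+a)^\alpha(y_i+b)^2\big]\,d\mathbf{y}$ with $a=x+s$ and $b=z(x+s)$, i.e.\ the weight $y^2e^{-y}$ modified by a cluster of $\alpha$ linear factors at $-a$ and a cluster of two at $-b$.

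The heart of the proof is to evaluate this modified integral in closed form. Whereas Theorem~\ref{p1} needed only the single-cluster identity of \cite[Section~22.2.2]{Mehta}, here I would use its two-cluster confluent generalization: for monic polynomials $C_j$ orthogonal with respect to $y^2e^{-y}$, with squared norms $h_j$,
\begin{equation*}
\int_{[0,\infty)^{N}}\!\!\Delta_N^2(\mathbf y)\prod_{i=1}^N\Big[y_i^2e^{-y_i}\prod_{l=1}^{p}(y_i-c_l)\Big]d\mathbf y
=\frac{N!\,\prod_{j=0}^{N-1}h_j}{\Delta_p(\mathbf c)}\,\det\big[C_{N+l-1}(c_k)\big]_{k,l=1}^{p},
\end{equation*}
taken in the limit where the $c_l$ coalesce to the two points $-a$ (multiplicity $\alpha$) and $-b$ (multiplicity $2$), so that $p=\alpha+2$ and $N=n-2$. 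In this confluent limit the coincident evaluation points produce successive $c$-derivatives $\tfrac{d^{\,l}}{dc^{\,l}}C_{N+i-1}(c)$, and since $C_j=(-1)^j j!\,L_j^{(2)}$ the derivative rule (\ref{prop3}), $\tfrac{d^l}{dz^l}L_M^{(2)}(z)=(-1)^lL_{M-l}^{(2+l)}(z)$, converts the $-b$ cluster (derivative orders $0,1$) into the columns $L^{(j+1)}_{n+i-j-2}(-z(x+s))$, $j=1,2$, and the $-a$ cluster (orders $0,\ldots,\alpha-1$) into $L^{(k-1)}_{n+i-k}(-(x+s))$, $k=3,\ldots,\alpha+2$; after transposition this is exactly the $(\alpha+2)\times(\alpha+2)$ block determinant in (\ref{exactmgf}).

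I expect the two-cluster confluent formula to be the main obstacle, since \cite{Mehta} supplies only the single-cluster case; I would establish it either by iterating Christoffel's theorem on the modified weight or by a direct de~Bruijn/Andr\'eief argument, then carefully pass to the coincident-point limit. The remainder is bookkeeping: the confluent Vandermonde $\Delta_{\alpha+2}(\mathbf c)$ factorizes into within-cluster super-factorials times the cross-cluster term $(b-a)^{2\alpha}=(x+s)^{2\alpha}(1-z)^{2\alpha}$, and it is precisely this denominator which, combined with the numerator factor $(1-z)^\alpha$ coming from $\lambda_1^\alpha$, reconciles to the stated $(1-z)^{-\alpha}$. Collecting every power of $x$ and $(x+s)$ arising from the two Jacobians, the weights, and this normalization then yields the prefactor $x^{mn-1}/(x+s)^{mn-4}$, completing the identification with (\ref{exactmgf}).
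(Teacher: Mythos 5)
Your proposal is correct and follows essentially the same route as the paper: the same factorization of the Vandermonde about $\lambda_2=x$, the same substitutions (your $\lambda_1=x(1-z)$ versus the paper's $\lambda_1=xz$ followed by a final reflection $z\mapsto 1-z$ in the inner integral), and the same reduction of the bulk to a two-cluster Christoffel-type average that is evaluated by a confluent limit of a determinant of Laguerre polynomials via the derivative rule (\ref{prop3}). The only difference is presentational: the ``two-cluster'' identity you flag as the main obstacle is precisely Mehta's distinct-point formula \cite[Eqs.~22.4.2, 22.4.11]{Mehta}, which the paper invokes directly and then specializes by the coincident-point limit technique of \cite{PeterJ}, so no new lemma actually needs to be established.
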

\begin{proof}
We use (\ref{wpdf}) in (\ref{lam2mgfdef}) with some manipulation to obtain
\begin{align*}
\mathcal{M}_{\kappa_E^2(\mathbf{A})}(s) & =K_{n,\alpha}\;e^{-s}\int_0^\infty
e^{-x} x^{\alpha}
\Biggl\{
\int_{\mathcal{R}_2}
e^{-\frac{\sum_{j=3}^n\lambda_j}{x}s}\nonumber\\
& \qquad \times
\prod_{j=3}^n
\lambda_j^\alpha e^{-\lambda_j}
\prod_{j=3}^n
(\lambda_j-x)^2
\prod_{3\leq k<j\leq n}
(\lambda_j-\lambda_k)^2\nonumber\\
& \qquad \times\left[
\int_0^x
(x-\lambda_1)^2
\prod_{j=3}^n (\lambda_j-\lambda_1)^2\lambda_1^\alpha e^{-\lambda_1\left(1+\frac{s}{x}\right)}
d\lambda_1\right]
d\lambda_3 \ldots d\lambda_n\Biggr\}dx\:,
\end{align*}
where we have used $
\Delta^2_n(\boldsymbol{\lambda})=(x-\lambda_1)^2
\prod_{j=3}^n(\lambda_j-\lambda_1)^2 \prod_{j=3}^n(\lambda_j-x)^2
\prod_{3\leq k<j\leq n} (\lambda_j-\lambda_k)^2$, which is valid for $n\geq
3$. Applying the variable transformation $\lambda_1=x z$ to the
innermost integral yields
\begin{align*}
\mathcal{M}_{\kappa_E^2(\mathbf{A})}(s) & =K_{n,\alpha}e^{-s}\int_0^\infty
e^{-x} x^{2m-1}
\Biggl\{
\int_{\mathcal{R}_2}
e^{-\frac{\sum_{j=3}^n\lambda_j}{x}s}
\prod_{j=3}^n
\lambda_j^\alpha e^{-\lambda_j}
\prod_{j=3}^n
(\lambda_j-x)^2
\nonumber\\
& \hspace{2cm}\times \prod_{3\leq k<j\leq n} (\lambda_j-\lambda_k)^2
\varphi\left(\frac{\lambda_3}{x},\frac{\lambda_4}{x},\ldots,\frac{\lambda_n}{x},x\right)
d\lambda_3 \ldots d\lambda_n\Biggr\}\;dx,
\end{align*}
where we have defined:
\begin{align*}
\varphi\left(\frac{\lambda_3}{x},\frac{\lambda_4}{x},\ldots,\frac{\lambda_n}{x},x\right):=
\int_0^1 z^\alpha (1-z)^2 \prod_{j=3}^n
\left(\frac{\lambda_j}{x}-z\right)^2 e^{-z\left(s+x\right)} dz \; \;
.
\end{align*}
By symmetry, we convert the ordered region of integration to an
unordered region, and subsequently introduce the variable
transformations $y_j=\frac{(x+s)}{x}\left(\lambda_j-x\right),\;
j=3,4,\ldots,n$, to obtain
\begin{align*}
\mathcal{M}_{\kappa_E^2(\mathbf{A})}(s) & =\frac{K_{n,\alpha}}{(n-2)!}e^{-s(n-1)}\int_0^\infty
\frac{e^{-(n-1)x}x^{mn-1}}{(x+s)^{mn-2m}}\nonumber\\
& \qquad \times
\Biggl\{
\int_{[0,\infty)^{n-2}}
\prod_{j=3}^n
y_j^2(y_j+x+s)^{\alpha} e^{-y_j}
\prod_{3\leq k<j\leq n}
(y_j-y_k)^2\nonumber\\
& \qquad \times \left[\int_0^1 z^\alpha (1-z)^2 \prod_{j=3}^n
\left(\frac{y_j}{x+s}+1-z\right)^2 e^{-z\left(s+x\right)} dz\right]
dy_3 \ldots dy_n\Biggr\}dx.
\end{align*}
Now, changing the order of the innermost integrals and then
relabeling the variables according to $y_j=x_{j-2},\;
j=3,4,\ldots,n$, yields
\begin{align}
\label{mgf} \mathcal{M}_{\kappa_E^2(\mathbf{A})}(s) &
=\frac{K_{n,\alpha}}{(n-2)!}e^{-s(n-1)}\int_0^\infty
\frac{e^{-(n-1)x} x^{mn-1}}{(x+s)^{mn-2\alpha-4}} \Biggl[\int_0^1
z^\alpha (1-z)^2
 e^{-z\left(s+x\right)}\nonumber\\
 & \qquad \qquad \times (-1)^{n\alpha}
R(n-2,-(1-z)(x+s),-(x+s),\alpha)dz\Biggr]dx,
\end{align}
where we have defined
\begin{align*}
R(n,a,b,\alpha) := \int_{[0,\infty)^{n}} \prod_{j=1}^{n} x_j^2
e^{-x_j}&\left(a-x_j\right)^2 (b-x_j)^{\alpha}
\Delta^2_{n}(\mathbf{x})\;dx_1 \ldots dx_{n} \; .
\end{align*}

The remainder of the proof is focused on evaluating
$R(n,a,b,\alpha)$. Following \cite[Eqs. 22.4.2, 22.4.11]{Mehta}, we
start with the related integral
\begin{align}
\label{eqmehta1}
\int_{[0,\infty)^n}
\prod_{j=1}^n
x_j^2 e^{-x_j}& \prod_{i=1}^{\alpha+2} (r_i-x_j) \Delta^2_{n}(\mathbf{x})\;dx_1dx_2\ldots dx_n\nonumber\\
& \qquad \qquad =\frac{n!}{K_{n,2}}\Delta^{-1}_{\alpha+2}(\mathbf{r})
\det\left[C_{n+i-1}(r_j)\right]_{i,j=1,2,\ldots,\alpha+2},
\end{align}
where $C_k(x)$ are monic polynomials orthogonal with respect to
$x^2e^{-x}$, over $0\leq x<\infty$. As such,
$C_k(x)=(-1)^kk!L^{(2)}_k(x)$, which upon substituting into
(\ref{eqmehta1}) gives
\begin{align}
\label{eqmehta}
& \int_{[0,\infty)^n}
\prod_{j=1}^n
x_j^2 e^{-x_j} \prod_{i=1}^{\alpha+2} (r_i-x_j) \Delta^2_{n}(\mathbf{x})\;dx_1dx_2\ldots dx_n\nonumber\\
&\qquad=
\frac{n!}{K_{n,2}}(-1)^{(n-1)\alpha}\prod_{j=0}^{\alpha+1}(-1)^{j+1}(n+j)!
\frac{\det\left[L^{(2)}_{n+i-1}(r_j)\right]_{i,j=1,2,\ldots,\alpha+2}}{\Delta_{\alpha+2}(\mathbf{r})}
\; .
\end{align}
In the above formula, the $r_i$s are, in general, distinct
parameters. However, if we can choose $r_i$ such that
\begin{align*}
r_i=\Biggl\{ \begin{array}{ll}
a & \text{if $i=1,2$}\\
b & \text{if $i=3,4,\ldots,\alpha+2$},
\end{array}
\end{align*}
then the left side of (\ref{eqmehta}) becomes precisely
$R(n,a,b,\alpha)$. Under direct substitution, however, it turns out
that the right-hand side of (\ref{eqmehta}) then gives a
$\frac{0}{0}$ indeterminate form. Therefore, the task is to evaluate
the following limits:
\begin{align}
\label{reval} R(n,a,b,\alpha)&
=\frac{n!}{K_{n,2}}(-1)^{(n-1)\alpha}\prod_{j=0}^{\alpha+1}(-1)^{j+1}(n+j)! \nonumber \\
& \times \lim_{\substack{r_1,r_2\to
a\\r_3,r_4,\ldots,r_{\alpha+2}\to b}}
\frac{\det\left[L^{(2)}_{n+i-1}(r_j)\right]_{i,j=1,2,\ldots,\alpha+2}}{\Delta_{\alpha+2}(\mathbf{r})}.
\end{align}
This can be solved by capitalizing on an approach outlined in
\cite{PeterJ} which gives
\begin{align}
\label{detlim}
 \lim_{\substack{r_1,r_2\to a\\r_3,r_4,\ldots,r_{\alpha+2}\to b}}&
 \frac{\det\left[L^{(2)}_{n+i-1}(r_j)\right]_{i,j=1,2,\ldots,\alpha+2}}{\Delta_{\alpha+2}(\mathbf{r})}\nonumber\\
 & \qquad =
 \frac{\det\left[\displaystyle\frac{d^{j-1}}{d a^{j-1}}L^{(2)}_{n+i-1}(a)\;\;\;\frac{d^{k-3}}{d b^{k-3}}L^{(2)}_{n+i-1}(b) \right]_{\substack{i=1,2,\ldots,\alpha+2\\
 j=1,2\\
 k=3,4,\ldots,\alpha+2}}}
 {\det\left[\displaystyle \frac{d^{j-1}}{da^{j-1}}a^{i-1}\;\;\; \frac{d^{k-3}}{db^{k-3}}b^{i-1}\right]
 _{\substack{i=1,2,\ldots,\alpha+2\\
 j=1,2\\
 k=3,4,\ldots,\alpha+2}}}.
\end{align}
The determinant in the denominator of (\ref{detlim}) can be written as
\begin{align*}
\det\left[\displaystyle \frac{d^{j-1}}{da^{j-1}}a^{i-1}\;\;\; \frac{d^{k-3}}{db^{k-3}}b^{i-1}\right]
 _{\substack{i=1,2,\ldots,\alpha+2\\
 j=1,2\\
 k=3,4,\ldots,\alpha+2}}=\prod_{j=0}^{\alpha-1}j!(a-b)^{2\alpha}.
\end{align*}
The numerator can also be simplified using (\ref{prop3}) to yield
\begin{align*}
&\det\left[\displaystyle\frac{d^{j-1}}{d a^{j-1}}L^{(2)}_{n+i-1}(a)\;\;\;\frac{d^{k-3}}{d b^{k-3}}L^{(2)}_{n+i-1}(b) \right]_{\substack{i=1,2,\ldots,\alpha+2\\
 j=1,2\\
 k=3,4,\ldots,\alpha+2}}\nonumber\\
 & \hspace*{1cm} =
 (-1)^{-\alpha}\prod_{j=0}^{\alpha+1}(-1)^{j+1}
 \det\left[L^{(j+1)}_{n+i-j}(a)\;\; L^{(k-1)}_{n+i-k+2}(b)\right]_{\substack{i=1,2,\ldots,\alpha+2\\
 j=1,2\\
 k=3,4,\ldots,\alpha+2}}.
\end{align*}
Substituting these expressions into (\ref{detlim}) and then the
result into (\ref{reval}) gives
\begin{align}
\label{Qans} R(n,a,b,\alpha)&
=\frac{(-1)^{-n\alpha}}{(a-b)^{2\alpha}}\frac{n!}{K_{n,2}}\frac{\prod_{j=0}^{\alpha+1}(n+j)!}{\prod_{j=0}^{\alpha-1}j!}
\det\left[L^{(j+1)}_{n+i-j}(a)\;\; L^{(k-1)}_{n+i-k+2}(b)\right]_{\substack{i=1,2,\ldots,\alpha+2\\
 j=1,2\\
 k=3,4,\ldots,\alpha+2}}.
\end{align}
Now we use this result in (\ref{mgf}), apply the relation $\int_0^a
f(x) dx=\int_0^a f(a-x) dx$ in the inner integral (i.e., with
respect to $z$), along with some basic manipulations which concludes
the proof.
\end{proof}

A straightforward Laplace inversion of the m.g.f. of
$\kappa_E^2(\mathbf{A})$ now gives the p.d.f. of
$\kappa_E^2(\mathbf{A})$, which is presented in the following
corollary:
\begin{corollary}
The p.d.f. of $\kappa_E^2(\mathbf{A})$, for $n\geq 3$, is given by
\begin{align}
\label{pdf}
f^{(\alpha)}_{\kappa_E^2(\mathbf{A})}(y)& =\frac{\Gamma(mn)}{y^{mn}}
\mathcal{L}^{-1}\Biggl\{\frac{e^{-s(n-1)}}{s^{mn-4}}\int_0^1
z^2(1-z)^{-\alpha}
e^{-(1-z)s}\nonumber\\
&\qquad \qquad
\times
\det\left[L^{(j+1)}_{n+i-j-2}(-sz)\;\; L^{(k-1)}_{n+i-k}(-s)\right]_{\substack{i=1,2,\ldots,\alpha+2\\
 j=1,2\\
 k=3,4,\ldots,\alpha+2}}dz\Biggr\}.
\end{align}
\end{corollary}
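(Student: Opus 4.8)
The plan is to show that the $x$-integration in the m.g.f.\ (\ref{exactmgf}) and the outer Laplace inversion decouple through a single Fubini-type interchange, converting the double integral into the single inverse transform scaled by $\Gamma(mn)/y^{mn}$ asserted in (\ref{pdf}). This is exactly the mechanism that produced the $\kappa_D^2(\mathbf{A})$ density (\ref{pdfdemmel}) from its m.g.f.\ (\ref{demgf}), so the argument is structural rather than computational.

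First I would consolidate the exponential factors in (\ref{exactmgf}). Pulling $x^{mn-1}$ to the front and merging the outer $e^{-s(n-1)}$ with the inner $e^{-(n-1)x}$ into $e^{-(n-1)(s+x)}$, one verifies that the entire remaining integrand depends on $s$ and $x$ only through the combination $u=s+x$. Concretely, setting
\begin{equation*}
P(u):=\frac{e^{-(n-1)u}}{u^{mn-4}}\int_0^1 z^2(1-z)^{-\alpha}e^{-(1-z)u}\,\det\left[L^{(j+1)}_{n+i-j-2}(-uz)\;\; L^{(k-1)}_{n+i-k}(-u)\right]_{\substack{i=1,2,\ldots,\alpha+2\\ j=1,2\\ k=3,4,\ldots,\alpha+2}}dz,
\end{equation*}
the m.g.f.\ collapses to $\mathcal{M}_{\kappa_E^2(\mathbf{A})}(s)=\int_0^\infty x^{mn-1}P(s+x)\,dx$. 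Note that $P(s)$ is precisely the bracketed argument of $\mathcal{L}^{-1}$ in the claimed density (\ref{pdf}).

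Next I would introduce $p(t):=\mathcal{L}^{-1}\{P\}(t)$, so that $P(u)=\int_0^\infty e^{-ut}p(t)\,dt$ with $p$ supported on $t\geq 0$. Substituting this representation for $P(s+x)$ and interchanging the $x$- and $t$-integrations, the $x$-integral evaluates in closed form via $\int_0^\infty x^{mn-1}e^{-xt}\,dx=\Gamma(mn)/t^{mn}$, giving $\mathcal{M}_{\kappa_E^2(\mathbf{A})}(s)=\Gamma(mn)\int_0^\infty t^{-mn}p(t)\,e^{-st}\,dt$. Reading the right-hand side as a Laplace transform in $s$ and inverting immediately yields $f^{(\alpha)}_{\kappa_E^2(\mathbf{A})}(y)=\Gamma(mn)\,y^{-mn}p(y)=\Gamma(mn)\,y^{-mn}\mathcal{L}^{-1}\{P\}(y)$, which is exactly (\ref{pdf}).

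The one point requiring care, and the main obstacle, is justifying the Fubini interchange together with the existence of $p=\mathcal{L}^{-1}\{P\}$ as a genuine function. Here the shift factor $e^{-(n-1)u}$ (with $n-1\geq 2$ for $n\geq 3$) is decisive: it dominates the polynomial growth of the determinant and the $z$-integral is uniformly bounded since $1-z\in[0,1]$, so $P(u)$ is analytic in a right half-plane and decays there. This guarantees that $p$ exists and that $x^{mn-1}\,|p(t)|\,e^{-xt}$ is absolutely integrable on $(0,\infty)^2$ for $\mathrm{Re}(s)$ large, validating the interchange; the identity then propagates to the full region of analyticity by uniqueness of the Laplace transform. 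As a consistency check, the $e^{-(n-1)u}$ shift forces $p$ (hence the density) to be supported on $y\geq n-1$, matching the elementary bound $\kappa_E^2(\mathbf{A})=\lambda_1/\lambda_2+1+\sum_{j\geq 3}\lambda_j/\lambda_2\geq n-1$.
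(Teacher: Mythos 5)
Your argument is correct and is precisely the ``straightforward Laplace inversion'' that the paper invokes without detail: you rewrite the m.g.f.\ $\int_0^\infty e^{-sy}f(y)\,dy$ as $\Gamma(mn)\int_0^\infty t^{-mn}p(t)e^{-st}\,dt$ by observing that (\ref{exactmgf}) depends on $s$ and $x$ only through $u=s+x$ (apart from the factor $x^{mn-1}$), and uniqueness of the Laplace transform then gives (\ref{pdf}), exactly the mechanism the paper uses implicitly here and explicitly for $\kappa_D^2(\mathbf{A})$. The only cosmetic caveat is that the uniform boundedness of the $z$-integral near $z=1$ rests not on $1-z\in[0,1]$ alone but on the determinant vanishing fast enough to cancel the $(1-z)^{-\alpha}$ singularity --- a point the paper itself relegates to a remark.
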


Although further simplification of (\ref{pdf}) seems intractable for
general matrix dimensions $m$ and $n$, we can obtain a closed-form
solution in the important case of square Gaussian matrices (i.e.,
$m=n$). This is given as follows:
\begin{corollary}
For $\alpha = 0$, (\ref{pdf}) becomes
\begin{align}
 f^{(0)}_{\kappa_E^2(\mathbf{A})}(y) =\frac{\Gamma(n^2)}{12}& n^2(n^2-1)
\sum_{i=0}^{n-1}\sum_{j=0}^{n-2}
\frac{(-n+1)_i(-n+2)_j}{(3)_i(4)_j\;i!j!}(j+1-i)(i+j+2)!\nonumber\\
&  \times \left(\sum_{k=0}^{i+j+2}\frac{(-1)^{i+j+k}}{(i+j+2-k)!}
\frac{y^{-n^2}(y-n+1)^{n^2+k-i-j-4}}{\Gamma\left(n^2+k-i-j-3\right)}
H(y-n+1)\right.\nonumber\\
&\hspace{3.3cm}-\left.\frac{y^{-n^2}(y-n)^{n^2-2}}{\Gamma(n^2-1)}H(y-n) \right).
\end{align}
\end{corollary}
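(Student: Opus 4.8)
The plan is to specialize the general p.d.f.\ (\ref{pdf}) to $\alpha=0$ and then carry out the inner $z$-integration and the Laplace inversion explicitly. Setting $\alpha=0$ forces $m=n$, so $mn=n^2$, the weight $(1-z)^{-\alpha}$ becomes $1$, and the block determinant in (\ref{pdf}) collapses to the $2\times2$ determinant $D(sz):=\det[L^{(j+1)}_{n+i-j-2}(-sz)]_{i,j=1,2}=L^{(2)}_{n-2}(-sz)L^{(3)}_{n-2}(-sz)-L^{(3)}_{n-3}(-sz)L^{(2)}_{n-1}(-sz)$, since the $k$-columns are vacuous. Thus the task reduces to evaluating $\tfrac{\Gamma(n^2)}{y^{n^2}}\mathcal{L}^{-1}\bigl\{s^{-(n^2-4)}e^{-s(n-1)}\int_0^1 z^2 e^{-(1-z)s}D(sz)\,dz\bigr\}$.

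The first and most delicate step is to expand $D(sz)$ as a single power series in $sz$. Using the terminating series (\ref{prop1}) for each Laguerre polynomial and extracting the coefficient of $(sz)^{i+j}$, the antisymmetry of the $2\times2$ determinant should combine the two products $L^{(2)}_{n-2}L^{(3)}_{n-2}$ and $L^{(2)}_{n-1}L^{(3)}_{n-3}$ into a single double sum. Concretely, I aim to show that $D(sz)$ equals a constant times $\sum_{i=0}^{n-1}\sum_{j=0}^{n-2}\frac{(-n+1)_i(-n+2)_j}{(3)_i(4)_j\,i!\,j!}(j+1-i)(-sz)^{i+j}$, where the antisymmetric factor $(j+1-i)$ is the hallmark of the determinantal structure. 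The difficulty is reconciling the mismatched summation ranges and Pochhammer parameters of the two products; I would handle it either by first applying a contiguous relation such as $L^{(2)}_N(x)=L^{(3)}_N(x)-L^{(3)}_{N-1}(x)$ to express $D$ over a single Laguerre family before expanding, or by a direct Cauchy-product computation in which the subtraction of the two products produces the factor $(j+1-i)$ and simultaneously collapses the leading constants into the stated prefactor $\frac{\Gamma(n^2)}{12}n^2(n^2-1)$. I expect this reorganization to be the main obstacle.

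With $D(sz)$ in single-sum form, I would interchange the finite summation with the $z$-integration and, using $e^{-(1-z)s}=e^{-s}e^{zs}$, evaluate for each $(i,j)$ the elementary integral $\int_0^1 z^{i+j+2}e^{zs}\,dz$ by repeated integration by parts. The $z=1$ endpoint contributes a finite sum over $k=0,\ldots,i+j+2$ of terms proportional to $e^{s}s^{-(k+1)}$, whose $e^{s}$ cancels the global $e^{-s}$ and combines with $e^{-s(n-1)}$; the $z=0$ endpoint contributes the single term $\propto s^{-(i+j+3)}$, which combines with $e^{-s(n-1)}e^{-s}=e^{-sn}$. Together with the residual $s^{i+j-(n^2-4)}$ factor (from pulling $s^{i+j}$ out of $(sz)^{i+j}$) this leaves, for each $(i,j)$, a finite combination of terms of the form $e^{-s(n-1)}s^{-b}$ and $e^{-sn}s^{-(n^2-1)}$.

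Finally, I would invert term by term using $\mathcal{L}^{-1}\{e^{-as}s^{-b}\}=(y-a)^{b-1}H(y-a)/\Gamma(b)$, as in \cite{Prud2}. The $z=1$ contributions produce the inner $k$-sum with $H(y-n+1)$ and the powers $(y-n+1)^{n^2+k-i-j-4}/\Gamma(n^2+k-i-j-3)$, while the $z=0$ contribution produces the $(y-n)^{n^2-2}/\Gamma(n^2-1)\,H(y-n)$ term. The sign $(-1)^{i+j}$ carried by $(-sz)^{i+j}$ distributes as $(-1)^{i+j+k}$ in the first family and cancels the $(-1)^{i+j+2}$ of the endpoint term to leave the bare minus sign in the second, exactly matching the two Heaviside contributions in the assertion. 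Restoring the $\Gamma(n^2)y^{-n^2}$ prefactor and the collected constant then yields the claimed expression.
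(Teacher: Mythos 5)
Your proposal is correct and follows essentially the route implicit in the paper: the $2\times 2$ Laguerre-determinant expansion you target --- with the antisymmetric factor $(j+1-i)$ and the constant $\tfrac{1}{12}n^2(n^2-1)$ --- is precisely the $\alpha=0$ specialization of the multi-sum expansion used in the proof of Theorem \ref{thm:keasy}, where Lemma \ref{lem:det2} supplies $j+1-i$ as the $2\times 2$ Vandermonde $\Delta_2(\mathbf{w})$, so the step you flag as the main obstacle is already handled by the paper's machinery. The subsequent integration by parts over $z$ and the term-by-term Laplace inversion proceed exactly as you describe and reproduce the stated formula, including the two Heaviside contributions and their signs.
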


It turns out that there is also an interesting connection
between the density of $\kappa_E^2(\mathbf{A})$ and the density of
the second smallest eigenvalue, $\lambda_2$.  To see this, we start
by writing the p.d.f.\ of $\lambda_2$ as
\begin{align*}
f_{\lambda_2}(x)&=\int_{\mathcal{R}_2} \int_0^x
f\left(\lambda_1,x,\ldots,\lambda_n\right)\;d\lambda_1 d\lambda_3
d\lambda_4\ldots d\lambda_n ,
\end{align*}
by definition.  This can be further simplified by using very similar
techniques to those used in the above m.g.f.\ derivation (we omit
the specific details) to yield
\begin{align*}
f_{\lambda_2}(x)=\frac{K_{n,\alpha}}{(n-2)!}x^{2\alpha+3}e^{-(n-1)x}
\int_0^1 &z^{\alpha}(1-z)^{2} e^{-zx}(-1)^{n\alpha}R(n-2,-(1-z)x,-x,\alpha) dz.
\end{align*}
Now, applying (\ref{Qans}), we obtain the following expression for
the p.d.f.\ of $\lambda_2$:
\begin{align}
\label{pdflam2}
f_{\lambda_2}(x) =x^3 e^{-(n-1)x}\int_0^1
& z^{2}
(1-z)^{-\alpha} e^{-(1-z)x}\nonumber\\
&
\times
\det\left[L^{(j+1)}_{n+i-j-2}(-xz)\;\; L^{(k-1)}_{n+i-k}(-x)\right]_{\substack{i=1,2,\ldots,\alpha+2\\
 j=1,2\\
 k=3,4,\ldots,\alpha+2}}dz.
\end{align}
An equivalent expression can also be obtained by starting with the
joint density of $\lambda_1$ and $\lambda_2$ given in \cite{Peter}.

\begin{rem}
We remark that the integrands corresponding to variable $z$ in (\ref{exactmgf}), (\ref{pdf}) and (\ref{pdflam2}) are well defined in the vicinity of $z=1$.
\end{rem}

Combining (\ref{pdf}) and (\ref{pdflam2}), we obtain the following
interesting connection between the densities of
$\kappa_E^2(\mathbf{A})$ and $\lambda_2$:
\begin{align} \label{eq:Connect}
f^{(\alpha)}_{\kappa_E^2(\mathbf{A})}(y)=\frac{\Gamma(mn)}{y^{mn}}\mathcal{L}^{-1}
\left\{\frac{f_{\lambda_2}(s)}{s^{mn-1}}\right\}.
\end{align}
A previous connection between these two densities was also
established in \cite{Krish}, but the result obtained was more
complicated. Moreover, it appears difficult to establish our
simplified connection (\ref{eq:Connect}) as a consequence of the
representation in \cite{Krish}. Our simplified representation here
was made possible by adopting an alternative derivation approach
based on the m.g.f., in contrast to the p.d.f.-based approach used
in \cite{Krish}.

It is also worth contrasting the expression (\ref{pdflam2}) with
prior equivalent results in the literature, particularly those
presented in \cite{Luis,Zan}. First, the result (\ref{pdflam2}) is
more compact, and in contrast to prior results in \cite{Luis,Zan} it
does not involve summations of functions over combinatorial partitions of
integers. Moreover, whilst both (\ref{pdflam2}) and the previous
expressions in \cite{Luis,Zan} involve determinants, the
representations are markedly different---in (\ref{pdflam2}), the
size of the determinant depends on the \emph{difference} between $m$
and $n$ (i.e., $\alpha$), whilst in \cite{Luis,Zan} it depends on
$m$ explicitly. (Determinantal expressions having a similar
$m$-dependence have also been derived for various more complicated
random matrix ensembles; for example, complex non-central Wishart
matrices \cite{JinMcKTCom} and generalized-$F$ matrices
\cite{SunMckTSP}.) This, in turn, gives remarkably simplified
expressions in our case when $m$ and $n$ are of roughly the same
order.  For example, when $\alpha = 0$ (i.e., $m = n$), we have the
following simple closed-form expression:
\begin{corollary}
For $\alpha=0$, (\ref{pdflam2}) evaluates to
\begin{align}
 f_{\lambda_2}(x)  =\frac{1}{12}n^2(n^2-1)e^{-(n-1)x}&
\sum_{i=0}^{n-1}\sum_{j=0}^{n-2}
\frac{(-n+1)_i(-n+2)_j}{(3)_i(4)_j\;i!j!}(j+1-i)(i+j+2)!\nonumber\\
& \times \left(\sum_{k=0}^{i+j+2}\frac{(-1)^{i+j+k}}{(i+j+2-k)!}x^{i+j+2-k}-e^{-x}\right)
.
\end{align}
\end{corollary}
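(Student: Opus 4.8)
The plan is to specialize the general second-eigenvalue density (\ref{pdflam2}) to $\alpha=0$ and then carry out the resulting $z$-integration in closed form. Setting $\alpha=0$ simplifies several features at once: the factor $(1-z)^{-\alpha}$ becomes unity, the $k$-indexed columns (which run over $k=3,\dots,\alpha+2$) disappear entirely, and the block determinant collapses to the $2\times2$ determinant
\[
\det\begin{pmatrix} L^{(2)}_{n-2}(-xz) & L^{(3)}_{n-3}(-xz)\\ L^{(2)}_{n-1}(-xz) & L^{(3)}_{n-2}(-xz)\end{pmatrix}
= L^{(2)}_{n-2}(-xz)L^{(3)}_{n-2}(-xz)-L^{(3)}_{n-3}(-xz)L^{(2)}_{n-1}(-xz),
\]
so that $f_{\lambda_2}(x)=x^3e^{-(n-1)x}\int_0^1 z^2e^{-(1-z)x}\,D(-xz)\,dz$, where $D$ denotes this determinant.

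The key step is to reorganize $D$ into a single double power series in $w=-xz$. Expanding each of the four generalized Laguerre polynomials by (\ref{prop1}) and exploiting the derivative relation (\ref{prop3}) (which gives $\tfrac{d}{dw}L^{(2)}_{n-1}=-L^{(3)}_{n-2}$ and $\tfrac{d}{dw}L^{(3)}_{n-2}=-L^{(4)}_{n-3}$), I expect $D$ to be expressible, up to an $n$-dependent constant, as $\widetilde A(w)\widetilde B(w)+w\big(\widetilde A(w)\widetilde B'(w)-\widetilde A'(w)\widetilde B(w)\big)$ with $\widetilde A\propto L^{(2)}_{n-1}$ and $\widetilde B\propto L^{(3)}_{n-2}$. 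Reading off the coefficient of $w^{i+j}$ in this Wronskian combination yields exactly $A_iB_j(j+1-i)$, where $A_i=\tfrac{(-n+1)_i}{(3)_i\,i!}$ and $B_j=\tfrac{(-n+2)_j}{(4)_j\,j!}$; this is the origin both of the antisymmetric factor $(j+1-i)$ and of the two Pochhammer ratios appearing in the claim, while the summation ranges $0\le i\le n-1$ and $0\le j\le n-2$ are inherited from the degrees of $L^{(2)}_{n-1}$ and $L^{(3)}_{n-2}$.

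With $D$ in this form the remainder is elementary. Interchanging the finite double sum with the $z$-integral reduces everything to $\int_0^1 z^{i+j+2}e^{-(1-z)x}\,dz$; repeated integration by parts (equivalently, the antiderivative of $z^pe^{zx}$) evaluates this as a finite sum of negative powers of $x$ plus one exponential term. Multiplying by the prefactors $x^3$ and $x^{i+j}$ converts the negative powers into the advertised positive powers $x^{i+j+2-k}$, the signs combine with the $(-1)^{i+j}$ released by $(-xz)^{i+j}$ to give $(-1)^{i+j+k}$, and the exponential contribution collapses to the clean $-e^{-x}$. Finally I would gather all $n$-dependent constants—the Laguerre leading coefficients $\tfrac{(3)_{n-1}}{(n-1)!}$ and $\tfrac{(4)_{n-2}}{(n-2)!}$, the determinant-normalization constant, and the factor $(i+j+2)!$ produced by the integral—into the stated prefactor $\tfrac1{12}n^2(n^2-1)$, which closes the computation.

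The main obstacle is the middle step: recasting the $2\times2$ determinant of four Laguerre polynomials of three different degrees and two different orders as the single clean double sum carrying the factor $(j+1-i)$. The Wronskian identification above is what makes this transparent, since a direct coefficient count there produces $\sum_{i+j=m}A_iB_j(j+1-i)$; by contrast, attempting the reduction by brute-force multiplication of the two series and matching against the target would demand delicate index shifts and cancellations, and would also obscure where the exact constant (and hence the factor $n+1$ in the final prefactor) comes from. Once that identity is established, the $z$-integration and the constant bookkeeping are routine.
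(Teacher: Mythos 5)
Your computation is correct and lands on the stated formula, but the route you take through the determinant is genuinely different from the paper's. The paper's mechanism (used explicitly in the proof of Theorem \ref{thm:keasy} and in Lemma \ref{lem:det2}, and implicitly behind this corollary) is to expand every Laguerre entry by (\ref{prop1}), factor the Pochhammer symbols so that all summation limits become uniform, and then evaluate the residual determinant of falling factorials as a Vandermonde; for $\alpha=0$ that Vandermonde is $\Delta_2(\mathbf{w})=(2+j)-(1+i)=j+1-i$, which is where the antisymmetric factor comes from, and the prefactor $\tfrac{1}{12}n^2(n^2-1)$ is exactly the normalization $\tfrac{n!\,(n+1)!}{2!\,3!\,(n-1)!\,(n-2)!}$ produced by that reduction. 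Your alternative is to recognize
$L^{(2)}_{n-2}L^{(3)}_{n-2}-L^{(3)}_{n-3}L^{(2)}_{n-1}$ as the Wronskian of $L^{(2)}_{n-1}$ and $L^{(2)}_{n-2}$ and to recast it as
\begin{equation*}
\frac{1}{n+1}\Bigl[L^{(2)}_{n-1}L^{(3)}_{n-2}+w\Bigl(L^{(2)}_{n-1}\tfrac{d}{dw}L^{(3)}_{n-2}-L^{(3)}_{n-2}\tfrac{d}{dw}L^{(2)}_{n-1}\Bigr)\Bigr],
\end{equation*}
whose coefficient of $w^{i+j}$ is indeed $A_iB_j(j+1-i)$. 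This identity is true, but in your write-up it is only ``expected''; to close the argument you must prove it, which follows from the recurrences $wL^{(\rho+1)}_{N-1}=(N+\rho)L^{(\rho)}_{N-1}-NL^{(\rho)}_{N}$ and $L^{(\rho)}_{N}=L^{(\rho+1)}_{N}-L^{(\rho+1)}_{N-1}$, and which also fixes the constant $\tfrac{1}{n+1}$ that, combined with the leading coefficients $\tfrac{(3)_{n-1}}{(n-1)!}$ and $\tfrac{(4)_{n-2}}{(n-2)!}$, yields $\tfrac{1}{12}n^2(n^2-1)$. The trade-off is clear: your Wronskian identity is slicker and more illuminating for the $2\times2$ case, while the paper's Vandermonde reduction handles every $\alpha$ uniformly; the subsequent $z$-integration by parts and the sign bookkeeping ($(-xz)^{i+j}$ contributing $(-1)^{i+j}$, the $z=0$ boundary term giving the $-e^{-x}$) are identical in both treatments and are handled correctly in your sketch.
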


Having analyzed $\kappa_E^2(\mathbf{A})$ for finite values of $m$
and $n$, we now focus our attention on the asymptotic behavior of
the scaled version of $\kappa_E^2(\mathbf{A})$.

\section{Asymptotic characterization of $\kappa_E^2(\mathbf{A})$}

Here we investigate how $\kappa_E^2(\mathbf{A})$ scales as $m$ and
$n$ tend to $\infty$ with $m-n=\alpha$ being fixed. Unlike in the
previous asymptotic analysis, in this case it is most convenient to
manipulate the m.g.f.\ of $\kappa_E^2(\mathbf{A})$, rather than the
p.d.f.\ We have the following key result:
\begin{theorem}\label{thm:keasy}
As $m$ and $n$ tend to $\infty$ such that $\alpha=m-n$ is fixed,
$\kappa_E^2(\mathbf{A})$ scales on the order of $n^3$. More
specifically, the scaled random variable
$V=\kappa_E^2(\mathbf{A})/(\mu n^3)$, with $\mu\in\mathbb{R}^+$ being an
arbitrary constant, has the following asymptotic p.d.f.
as $m$ and $n$ tend to $\infty$ with $\alpha=m-n$ fixed:
\begin{align}
\label{scaledpdf}
 f^{(\alpha)}_V(v)=\frac{e^{-\frac{1}{\mu v}}}{\mu ^4 v^5}
\int_0^1 &
\frac{z^2}{(1-z)^{\alpha}}\; \det\left[g_{i,j}(z,v)\;\;\; g_{i,k-2}(1,v)\right]_{\substack{i=1,2,\ldots,\alpha+2\\
 j=1,2\\
 k=3,4,\ldots,\alpha+2}}\:dz,
\end{align}
where, for $i,j=1,2,\ldots,\alpha+2$,
\begin{align*}
g_{i,j}(z,v)=\sum_{p=0}^{i-1}\sum_{q=0}^p\mathtt{S}^{(q)}_p\binom{i-1}{p}&j^{i-1-p}
\left(\frac{z}{\mu
v}\right)^{\frac{q-j-1}{2}}I_{q+j+1}\left(2\sqrt{\frac{z}{\mu v}}
\right) \; .
\end{align*}
\end{theorem}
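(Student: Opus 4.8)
The plan is to obtain the asymptotic density of $\kappa_E^2(\mathbf{A})$ directly from the exact m.g.f.\ (\ref{exactmgf}), following the alternative route flagged in the remark after Theorem \ref{th:asylam1}, rather than from the p.d.f.\ (\ref{pdf}). First I would introduce the scaling $V=\kappa_E^2(\mathbf{A})/(\mu n^3)$ and express the asymptotic density as an inverse Laplace transform: since $\mathcal{M}_{\kappa_E^2(\mathbf{A})}(s)=E[e^{-s\kappa_E^2(\mathbf{A})}]$, the density of $V$ is $\mu n^3$ times the inverse Laplace transform (in the variable conjugate to $y=\mu n^3 v$) of $\mathcal{M}_{\kappa_E^2(\mathbf{A})}$. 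I would rescale the Laplace variable accordingly, writing $s\mapsto s/(\mu n^3)$, and track how each factor in (\ref{exactmgf}) behaves as $n\to\infty$ with $\alpha$ fixed. The choice of the $n^3$ scaling is dictated by exactly the same mechanism as in Theorem \ref{th:asylam1}: the factor $x^{mn-1}/(x+s)^{mn-4}$ together with $e^{-(n-1)x}$ forces the dominant contribution of the $x$-integral to concentrate so that a finite, nonzero limit emerges only under cubic scaling.

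Next I would carry out the asymptotic evaluation of the $x$-integral in (\ref{exactmgf}). After the rescaling, the combination $e^{-(n-1)x}x^{mn-1}/(x+\hat s)^{mn-4}$ (with $\hat s$ the rescaled Laplace variable) should be analyzed by a Laplace/saddle-point argument, localizing the integrand near its peak; the term $(1-z)^{-\alpha}e^{-(1-z)(x+s)}$ and the ratio $x^{mn-1}/(x+s)^{mn-4}$ will combine to produce a factor of the form $e^{-1/(\mu v)}$ after inversion, mirroring the appearance of $e^{-1/(\mu v)}$ in (\ref{eq:Th1Result}). The Laguerre polynomials $L^{(j+1)}_{n+i-j-2}(-z(x+s))$ and $L^{(k-1)}_{n+i-k}(-(x+s))$ must then be handled through their known large-degree asymptotics near the origin: as $n\to\infty$ the scaled Laguerre polynomial $L_{n+c}^{(\rho)}(-\zeta/n)$ (for appropriate fixed shifts) is governed by a Bessel-function limit, namely $n^{-\rho}L_{n}^{(\rho)}(-\zeta/n)\to \zeta^{-\rho/2}I_\rho(2\sqrt{\zeta})$. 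This Bessel limit is precisely what produces the functions $I_{q+j+1}$ appearing in $g_{i,j}(z,v)$.

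The heart of the argument, and the step I expect to be the main obstacle, is showing that the entire determinant in (\ref{exactmgf}) converges, column by column, to the determinant of the $g_{i,j}$ in (\ref{scaledpdf}), and in particular explaining the emergence of the Stirling numbers and the binomial/power factors $\mathtt{S}^{(q)}_p\binom{i-1}{p}j^{i-1-p}$. I would expand each entry using the series definition (\ref{prop1}) of the Laguerre polynomials, substitute the rescaled argument, and identify the surviving terms after taking $n\to\infty$. The monomials in the row index $i$ that arise from the Pochhammer ratios $(n+i-j-2)!/\dots$ must be re-expressed via Lemma \ref{lem:stirling}, converting powers $i^{\,p}$ (and shifted analogues) into falling factorials and hence into the Stirling coefficients $\mathtt{S}^{(q)}_p$; the binomial factor $\binom{i-1}{p}$ will come from collecting the contributions at order $z^q$ after applying the binomial theorem to $(z(x+s))$-type arguments, exactly as the binomial theorem and (\ref{ster1}) were used in the proof of Theorem \ref{th:asylam1}. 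Uniformity of the Laguerre-to-Bessel convergence across all entries of the determinant, and justification of interchanging the $n\to\infty$ limit with the $z$-integration and the Laplace inversion, are the delicate points; I would control these by the dominated-convergence bounds implicit in the well-definedness of the integrands near $z=1$ noted in the remark following (\ref{pdflam2}). Finally, reassembling the prefactors $K_{n,\alpha}$, the Gamma and factorial ratios, and the leftover $z^2/(1-z)^\alpha$ weight produces the stated $e^{-1/(\mu v)}/(\mu^4 v^5)$ normalization and completes the identification with (\ref{scaledpdf}).
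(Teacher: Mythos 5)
Your proposal is essentially the paper's own route: the paper likewise works from the exact m.g.f.\ (\ref{exactmgf}) rather than the p.d.f., substitutes $s\mapsto s/(\mu n^3)$, expands the Laguerre determinant via (\ref{prop1}) together with a Pochhammer factorization, reduces the resulting determinant to a Vandermonde in the summation indices (Lemma \ref{lem:det2}), takes termwise limits after the substitution $t=xn$ (no saddle-point machinery is needed), and only at the end resums the surviving series into Bessel functions before Laplace inversion. One caution on your sketch: the entrywise Mehler--Heine limit $L^{(\rho)}_{n+c}(-\zeta/n)\to$ Bessel cannot be applied directly inside the determinant, since to leading order the entries would not depend on the row index $i$ and the determinant would degenerate; the nontrivial $i$-dependence in $g_{i,j}$ actually arises by binomially expanding $(j+l_j)^{i-1}$ from the Vandermonde factor and Stirling-expanding powers of the \emph{summation} index $l_j$ (not powers of $i$, and not coefficients of $z^q$, as you suggest), after which the sums over $l_j$ resum to ${}_0F_1$'s and hence to the $I_{q+j+1}$.
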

\begin{proof}
Our strategy is to derive the m.g.f. of $V$ using the m.g.f.\ of
$\kappa_E^2(\mathbf{A})$ given in (\ref{exactmgf}). Subsequent
application of the limits on $m$ and $n$ followed by the Laplace
inversion will then yield the desired asymptotic p.d.f. As such, we
can write
\begin{equation}
\label{MGFasydef} \mathcal{M}_{\frac{\kappa_E^2(\mathbf{A})}{\mu
n^3}}(s)=\mathcal{M}_{\kappa_E^2(\mathbf{A})}\left(\frac{s}{\mu
n^3}\right).
\end{equation}
We first employ (\ref{prop1}), and use similar arguments as in the
derivation of (\ref{eqdet}), to write the determinant in (\ref{exactmgf}) as
\begin{align}
 \det & \left[L^{(j+1)}_{n+i-j-2}(-z(x+s))\;\; L^{(k-1)}_{n+i-k}(-(x+s))\right]_{\substack{i=1,2,\ldots,\alpha+2\\
 j=1,2\\
 k=3,4,\ldots,\alpha+2}}\nonumber\\
 & =\frac{\prod_{j=1}^{\alpha+2}(n+j-1)!\left(\prod_{j=3}^{\alpha+2}(j-1)!\right)^{-1}}{2!3!(n+\alpha-1)!(n+\alpha-2)!\prod_{j=3}^{\alpha+2}(n+\alpha+2-j)!}\nonumber\\
 & \quad \times
 \sum_{l_1=0}^{n+\alpha-1}\sum_{l_2=0}^{n+\alpha-2}\sum_{l_3=0}^{n+\alpha-1}\sum_{l_4=0}^{n+\alpha-2}\sum_{l_5=0}^{n+\alpha-3}\ldots
 \sum_{l_{\alpha+2}=0}^{n}\frac{(-n-\alpha+1)_{l_1}(-n-\alpha+2)_{l_2}}{(3)_{l_1}(4)_{l_2}l_1!l_2!}\nonumber\\
 & \hspace{1cm}\qquad \times
 z^{l_1+l_2}(-(x+s))^{l_1+l_2}\left(\prod_{k=1}^{\alpha}
 \frac{(-n-\alpha+k)_{l_{k+2}}}{(k+2)_{l_{k+2}}l_{k+2}!}(-(x+s))^{l_{k+2}}\right)\nonumber\\
  & \qquad \hspace{1cm} \times \det\left[\prod_{l=0}^{\alpha+1-i}(\tilde{z}_j-l)\;\;\;\prod_{l=0}^{\alpha+1-i}(\tilde{w}_k-l)\right]_{\substack{i=1,2,\ldots,\alpha+2\\
 j=1,2\\
 k=3,4,\ldots,\alpha+2}}
\end{align}
where $\tilde{z}_j=n+\alpha-j-l_j$ and $\tilde{w}_k=n+\alpha+2-k-l_k$.
Now, invoking Lemma \ref{lem:det2} to compute the remaining
determinant, we use the resulting expression along with
(\ref{exactmgf}) in (\ref{MGFasydef}) to obtain the m.g.f. of
$\frac{\kappa_E^2(\mathbf{A})}{\mu n^3}$ as
\begin{align*}
 \mathcal{M}_{\frac{\kappa_E^2(\mathbf{A})}{\mu n^3}}(s) &=
\frac{(n+\alpha)^2(n+\alpha+1)(n+\alpha-1)}{2!3!\prod_{j=3}^{\alpha+2}(j-1)!}e^{-\frac{s}{\mu n^3}(n-1)}\nonumber\\
& \times \int_0^\infty \frac{e^{-(n-1)x}
x^{n(n+\alpha)-1}}{\left(x+\frac{s}{\mu n^3}\right)^{n(n+\alpha)-4}}
\Biggl\{\int_0^1 (1-z)^{-\alpha} z^2
 e^{-(1-z)\left(\frac{s}{\mu n^3}+x\right)}\nonumber\\
 & \times \sum_{l_1=0}^{n+\alpha-1}\sum_{l_2=0}^{n+\alpha-2}\sum_{l_3=0}^{n+\alpha-1}\sum_{l_4=0}^{n+\alpha-2}
 \sum_{l_5=0}^{n+\alpha-3}\ldots
 \sum_{l_{\alpha+2}=0}^{n}\frac{(-n-\alpha+1)_{l_1}(-n-\alpha+2)_{l_2}}{(3)_{l_1}(4)_{l_2}l_1!l_2!}z^{l_1+l_2}\nonumber\\
  & \times
 \left(\prod_{k=1}^{\alpha}
 \frac{(-n-\alpha+k)_{l_{k+2}}}{(k+2)_{l_{k+2}}l_{k+2}!}\left(-\left(x+\frac{s}{\mu n^3}\right)\right)^{l_{k+2}}\right) \left(-\left(x+\frac{s}{\mu n^3}\right)\right)^{l_1+l_2}\Delta_{\alpha+2}(\mathbf{w})\;dz\Biggr\}
 \;dx.
\end{align*}
Then, we use the variable transformation $t=xn$, change the order of
integration, and apply elementary limiting arguments as $n \to
\infty$ to obtain
\begin{align*}
\lim_{n\to\infty}\mathcal{M}_{\frac{\kappa_E^2(\mathbf{A})}{\mu n^3}}(s)&=
\frac{1}{2!3!\prod_{j=3}^{\alpha+2}(j-1)!}
\int_0^1
(1-z)^{-\alpha}
z^2
\sum_{l_1=0}^{\infty}\sum_{l_2=0}^{\infty}\ldots
 \sum_{l_{\alpha+2}=0}^{\infty}\frac{z^{l_1+l_2}\Delta_{\alpha+2}(\mathbf{w})}{(3)_{l_1}(4)_{l_2}l_1!l_2!}\nonumber\\
 & \qquad \times
 \left(\prod_{k=1}^{\alpha}
 \frac{1}{(k+2)_{l_{k+2}}l_{k+2}!}\right) \left\{
 \int_0^\infty
 e^{-t-\frac{s}{\mu t}}t^{3+\sum_{j=1}^{\alpha+2}l_j}\;dt\right\}\;dz.
\end{align*}
Using the variable transformation $x=\frac{1}{\mu t}$ we compute the
inner integral, and subsequently
perform a Laplace inversion to yield the limiting p.d.f.\ of
$V=\frac{\kappa_E^2(\mathbf{A})}{\mu n^3}$:
\begin{align*}
f^{(\alpha)}_V(v) &=
\frac{1}{2!3!\prod_{j=3}^{\alpha+2}(j-1)!\;}\frac{e^{-\frac{1}{\mu
v}}}{\mu^4 v^5} \int_0^1 (1-z)^{-\alpha} z^2
\sum_{l_1=0}^{\infty}\sum_{l_2=0}^{\infty}\ldots
 \sum_{l_{\alpha+2}=0}^{\infty}\frac{z^{l_1+l_2}\Delta_{\alpha+2}(\mathbf{w})}{(3)_{l_1}(4)_{l_2}l_1!l_2! \;(\mu v)^{l_1+l_2}}\nonumber\\
 & \qquad \qquad\times
 \left(\prod_{k=1}^{\alpha}
 \frac{1}{(k+2)_{l_{k+2}}l_{k+2}!\;(\mu v)^{l_{k+2}}}\right)dz
 \nonumber \\
&=
\frac{1}{2!3!\prod_{j=3}^{\alpha+2}(j-1)!\;}\frac{e^{-\frac{1}{\mu
v}}}{\mu^4 v^5} \int_0^1
(1-z)^{-\alpha}z^2\nonumber\\
& \qquad \times \det\left[ \sum_{l_j=0}^\infty \frac{z_j^{i-1}}{(j+2)_{l_j}l_j!} \frac{z^{l_j}}{(\mu v)^{l_j}}
\;\;\;\;\;\sum_{l_k=0}^\infty \frac{w_k^{i-1}}{(k)_{l_k}l_k!} \frac{1}{(\mu v)^{l_k}} \right]_{\substack{i=1,2,\ldots,\alpha+2\\
 j=1,2\\
 k=3,4,\ldots,\alpha+2}}\; dz.
\end{align*}
Recalling the relations
\begin{align*}
z_j&=j+l_j, \; \; \;\;\;\;\;\;\;j=1,2\;, \\
w_j&=j+l_j-2,\; \; \; j=3,4,\ldots,\alpha+2\;,
\end{align*}
with some manipulation then gives
\begin{align}
\label{pdfasybef}
f^{(\alpha)}_V(v)& =
\frac{1}{2!3!\prod_{j=3}^{\alpha+2}(j-1)!\;}\frac{e^{-\frac{1}{\mu v}}}{\mu^4 v^5}
\int_0^1
(1-z)^{-\alpha}z^2\det\left[ \tilde{g}_{i,j}(z,v)\;\;\;\tilde{g}_{i,k-2}(1,v) \right]_{\substack{i=1,2,\ldots,\alpha+2\\
 j=1,2\\
 k=3,4,\ldots,\alpha+2}}\:dz,
\end{align}
where
\begin{align*}
\tilde{g}_{i,j}(z,v)=\sum_{p=0}^{i-1}\binom{i-1}{p}j^{i-1-p}\sum_{l_j=0}^\infty
\frac{l_j^p}{(j+2)_{l_j} l_j!}\left(\frac{z}{\mu v}\right)^{l_j}.
\end{align*}
Noting the fact that $ l_j^p=\sum_{q=0}^p \mathtt{S}^{(q)}_p
l_j(l_j-1) \cdots (l_j-q+1)$ and making the identification
\begin{equation*}
\sum_{k=0}^\infty
\frac{x^k}{(a)_k(k-N)!}=(a-1)!\;x^{\frac{N+1-a}{2}}I_{a+N-1}(2\sqrt{x})
\end{equation*}
we arrive at
\begin{align}
\label{gdef}
\tilde{g}_{i,j}(z,v)=(j+1)!\sum_{p=0}^{i-1}\sum_{q=0}^p \binom{i-1}{p}&j^{i-1-p}\mathtt{S}^{(q)}_p\left(\frac{z}{\mu
v}\right)^{\frac{q-j-1}{2}}I_{q+j+1}\left(2\sqrt{\frac{z}{\mu
v}}\right).
\end{align}
Finally, noting the fact that
$\tilde{g}_{i,j}(z,v)=(j+1)!\;g_{i,j}(z,v),\;i,j=1,2,\ldots,\alpha+2$,
and using (\ref{gdef}) in (\ref{pdfasybef}) with some algebraic
manipulation concludes the proof.
\end{proof}

For $\alpha=0$, we have the following remarkably simple closed-form
solution:
\begin{corollary}
For $\alpha=0$, (\ref{scaledpdf}) becomes
\begin{align*}
 f^{(0)}_V(v)=\frac{1}{576}\frac{e^{-\frac{1}{\mu v}}}{\mu^4 v^5}&
\left(
16\;{}_2F_3\left(3,\frac{7}{2};4,4,6;\frac{4}{\mu v}\right)-\frac{4}{\mu v}\;
{}_1F_2\left(\frac{7}{2};5,7;\frac{4}{\mu v}\right)
\right.\\
& \hspace{3cm}\left.+ \frac{3}{\mu v}\; {}_3F_4\left(\frac{7}{2},4,4;3,5,5,7;\frac{4}{\mu v}
\right)\right)\:,
\end{align*}
where ${}_pF_q\left(a_1,a_2,\ldots,a_p;b_1,b_2\ldots,b_q;z\right)$ is the generalized hypergeometric function.
\end{corollary}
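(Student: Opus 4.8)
The plan is to specialize the determinantal integral (\ref{scaledpdf}) to $\alpha=0$, collapse its integrand to a scalar combination of products of two modified Bessel functions, and then evaluate the resulting $z$-integral over $[0,1]$ termwise, recognizing each piece as a generalized hypergeometric function of argument $4/(\mu v)$.

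First I would carry out the reduction of the determinant. When $\alpha=0$ the index $k$ in (\ref{scaledpdf}) ranges over the empty set, so the block matrix collapses to the $2\times2$ matrix $[g_{i,j}(z,v)]_{i,j=1,2}$ and the weight $(1-z)^{-\alpha}$ equals unity. Evaluating the four entries directly from the definition of $g_{i,j}$, and using the low-order Stirling values $\mathtt{S}^{(0)}_1=0$, $\mathtt{S}^{(1)}_1=1$, I would obtain (abbreviating $I_r:=I_r(2\sqrt{z/(\mu v)})$ and $A:=\mu v/z$)
\begin{align*}
g_{1,1}=A\,I_2,\qquad g_{1,2}=A^{3/2}I_3,\qquad g_{2,1}=A\,I_2+A^{1/2}I_3,\qquad g_{2,2}=2A^{3/2}I_3+A\,I_4.
\end{align*}
Expanding the $2\times2$ determinant, the $I_2I_3$ contributions partially cancel, leaving
\begin{align*}
\det[g_{i,j}]_{i,j=1,2}=A^2\bigl(I_2I_4-I_3^2\bigr)+A^{5/2}I_2I_3.
\end{align*}
Multiplying by the factor $z^2$ in the integrand of (\ref{scaledpdf}) clears the negative powers of $z$ except for a single $z^{-1/2}$ in the last term, giving
\begin{align*}
f^{(0)}_V(v)=\frac{e^{-1/(\mu v)}}{\mu^4v^5}\int_0^1\Bigl[(\mu v)^2\bigl(I_2I_4-I_3^2\bigr)+(\mu v)^{5/2}z^{-1/2}I_2I_3\Bigr]dz.
\end{align*}

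The key analytic step is the evaluation of the three integrals $\int_0^1 I_2I_4\,dz$, $\int_0^1 I_3^2\,dz$, and $\int_0^1 z^{-1/2}I_2I_3\,dz$. For this I would invoke the product series for modified Bessel functions,
\begin{align*}
I_\rho(w)I_\sigma(w)=\sum_{k=0}^{\infty}\frac{\Gamma(\rho+\sigma+2k+1)}{k!\,\Gamma(\rho+k+1)\Gamma(\sigma+k+1)\Gamma(\rho+\sigma+k+1)}\left(\frac{w}{2}\right)^{\rho+\sigma+2k},
\end{align*}
with $w=2\sqrt{z/(\mu v)}$, which converts each product into a single power series in $z/(\mu v)$. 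Integrating termwise against $z^s$ over $[0,1]$ (with $s=0$ for the first two integrals and $s=-\tfrac12$ for the third) contributes only the elementary factor $1/\bigl(s+(\rho+\sigma)/2+k+1\bigr)$, leaving a single series in $k$ whose term ratio is a rational function of $k$.

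Finally I would identify each single series as a ${}_pF_q$ by reading off its term ratio; the duplication of $\Gamma(\rho+\sigma+2k+1)$ is precisely what produces the argument $4/(\mu v)$. Carrying this out gives
\begin{align*}
\int_0^1 I_2I_4\,dz&=\frac{(\mu v)^{-3}}{192}\,{}_3F_4\!\left(\tfrac72,4,4;3,5,5,7;\tfrac{4}{\mu v}\right),\\
\int_0^1 I_3^2\,dz&=\frac{(\mu v)^{-3}}{144}\,{}_1F_2\!\left(\tfrac72;5,7;\tfrac{4}{\mu v}\right),\\
\int_0^1 z^{-1/2}I_2I_3\,dz&=\frac{(\mu v)^{-5/2}}{36}\,{}_2F_3\!\left(3,\tfrac72;4,4,6;\tfrac{4}{\mu v}\right).
\end{align*}
Substituting these back, the powers of $\mu v$ telescope, and collecting the constants via $\tfrac{1}{192}=\tfrac{3}{576}$, $\tfrac{1}{144}=\tfrac{4}{576}$, $\tfrac{1}{36}=\tfrac{16}{576}$ reproduces the stated closed form. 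The main obstacle I anticipate is the bookkeeping in this last identification: tracking the half-integer shift $s=-\tfrac12$ in the $I_2I_3$ integral, and correctly resolving which factors survive after the repeated $(k+\cdot)$ terms from the duplication formula cancel against the Pochhammer denominators, since this determines whether a parameter lands in the numerator or denominator list of each ${}_pF_q$.
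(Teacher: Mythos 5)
Your proposal is correct and follows essentially the same route as the paper: specializing the determinant to the $2\times 2$ case, reducing the integrand to $(\mu v)^2(I_2I_4-I_3^2)+(\mu v)^{5/2}z^{-1/2}I_2I_3$, and evaluating the three Bessel-product integrals over $[0,1]$ as ${}_pF_q$ functions of argument $4/(\mu v)$ — the only difference being that the paper cites the tabulated integral \cite[Eq.~2.15.19.1]{Prud} where you derive it directly from the Watson product series and termwise integration. I verified the entries $g_{i,j}$, the determinant cancellation, the three term ratios, and the constants $\tfrac{1}{192},\tfrac{1}{144},\tfrac{1}{36}$; everything checks out.
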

This is easily obtained by setting $\alpha=0$ and integrating using
\cite[Eq. 2.15.19.1]{Prud} with some basic algebraic manipulations.

In Fig. \ref{Fig2}, we illustrate the applicability of the analytical asymptotic results in the finite context. Specifically, we compare the analytical asymptotic p.d.f. given in Theorem \ref{thm:keasy} with that of simulated points corresponding to $\alpha=0,n=10,\mu=4$ and $\alpha=1,n=50,\mu=4$. The close agreement is clearly apparent.

\begin{figure}
 \centering
 \vspace*{1.0cm}
     \subfigure[$\alpha=0$ and $n=10$]{
            \includegraphics[width=.7\textwidth]{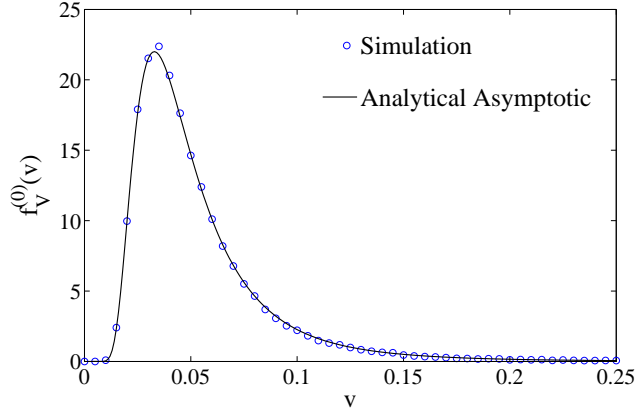}}
     \hspace{.3in}
     \subfigure[$\alpha=1$ and $n=50$]{
          \includegraphics[width=.7\textwidth]{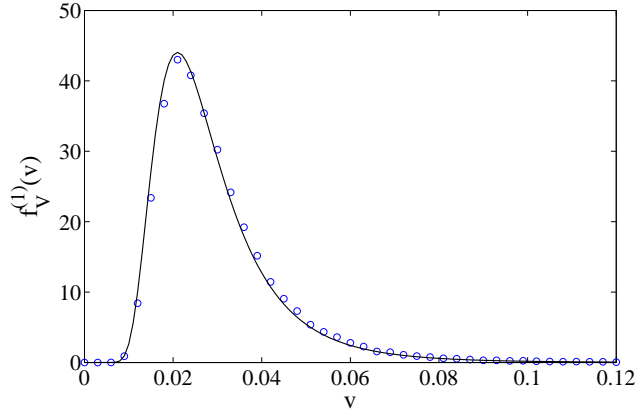}}\\
 \caption{Comparison of simulated data points and the analytical p.d.f.\ $f_V^{(\alpha)}(v)$ for two different $n,\alpha$ settings with $\mu=4$.}
 \vspace*{0.3cm}
 \label{Fig2}
\end{figure}

\appendix

\section{Some Useful Determinant Results}

The following results are useful for the proofs of Theorems
\ref{eq:ThExact} and \ref{thm:keasy}. For notational convenience we denote
$\Delta_{N}(\mathbf{x})=\prod_{1\leq l<k \leq N}\left(x_k-x_l\right)$ where $\mathbf{x}=\{x_1,x_2,\ldots,x_N\}$ with $\Delta_1(\mathbf{x})=1$.

\begin{lemma} \label{le:App}
Let $n,\alpha\in\mathbb{Z}^+$ with $n,\alpha\geq 1$. Then
\begin{equation*}
{\rm
det}\left[\prod_{i=0}^{\alpha-k-1}(\tilde{c}_l-i)\right]_{k,l=1,2,\ldots,\alpha}
=\Delta_{\alpha}(\mathbf{c})
\end{equation*}
where $\tilde{c}_l=n+\alpha-1-j_l-l$, $\mathbf{c}=\{c_1(j_1),c_2(j_2),\ldots, c_\alpha(j_\alpha)\}$ and $c_l(j_l)=l+j_l$ with $j_l=0,1,\ldots,n+\alpha-1-l$.
\end{lemma}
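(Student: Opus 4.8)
The plan is to recognize the determinant as a disguised Vandermonde determinant. First I would make the affine substitution $x_l := \tilde{c}_l = n+\alpha-1-c_l$, so that the $(k,l)$ entry becomes $\prod_{i=0}^{\alpha-k-1}(x_l-i)$, which is the falling factorial $(x_l)^{\underline{\alpha-k}}$ --- a \emph{monic} polynomial in $x_l$ of degree $\alpha-k$. As $k$ runs from $1$ to $\alpha$, these degrees run through $\alpha-1,\alpha-2,\ldots,1,0$ (the bottom row $k=\alpha$ being the empty product $1$ of degree $0$), so the rows carry a complete set of distinct degrees with the top degree placed first.

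Next I would clear the lower-order terms row by row. Because the degrees are all distinct, the subdominant monomials in the degree-$(\alpha-k)$ polynomial of row $k$ can be removed by subtracting suitable multiples of the rows $k'>k$, which carry the strictly smaller degrees $\alpha-k'<\alpha-k$. These elementary row operations leave the determinant unchanged and convert the $(k,l)$ entry into the pure monomial $x_l^{\alpha-k}$. The resulting array $[x_l^{\alpha-k}]_{k,l=1,\ldots,\alpha}$ is precisely the standard Vandermonde matrix with its rows written in reverse order; reversing $\alpha$ rows contributes the sign $(-1)^{\binom{\alpha}{2}}$, so that
\[
\det\left[\prod_{i=0}^{\alpha-k-1}(\tilde{c}_l-i)\right]_{k,l=1,\ldots,\alpha}
=(-1)^{\binom{\alpha}{2}}\,\Delta_\alpha(\mathbf{x}).
\]

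Finally I would translate $\Delta_\alpha(\mathbf{x})$ back into $\Delta_\alpha(\mathbf{c})$. Since $x_{l'}-x_l=c_l-c_{l'}=-(c_{l'}-c_l)$, each of the $\binom{\alpha}{2}$ factors in $\Delta_\alpha(\mathbf{x})=\prod_{l<l'}(x_{l'}-x_l)$ flips sign, giving $\Delta_\alpha(\mathbf{x})=(-1)^{\binom{\alpha}{2}}\Delta_\alpha(\mathbf{c})$. Multiplying the two sign contributions yields $(-1)^{2\binom{\alpha}{2}}=(-1)^{\alpha(\alpha-1)}=1$, and the claimed value $\Delta_\alpha(\mathbf{c})$ follows. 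The only delicate point is the sign bookkeeping: one must check that the ordering sign from the reversed Vandermonde and the sign from the affine change of variables are genuinely equal, so that they cancel exactly rather than leaving a residual $\pm 1$; everything else is the classical fact that replacing monomials by monic polynomials of the same degrees does not alter a Vandermonde-type determinant.
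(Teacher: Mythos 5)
Your proposal is correct and follows essentially the same route as the paper: recognize the entries as monic falling factorials of distinct degrees, row-reduce to pure monomials (the paper makes these row operations explicit via Stirling numbers of the second kind, which you leave implicit), obtain a row-reversed Vandermonde determinant, and observe that the reversal sign and the sign from the affine flip $\tilde{c}_l = n+\alpha-1-c_l$ cancel. Your sign bookkeeping via $(-1)^{\binom{\alpha}{2}}$ agrees with the paper's $(-1)^{\lfloor\alpha/2\rfloor}$ since these exponents have the same parity.
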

\begin{proof}
Let us rewrite the determinant as
\begin{align*}
\label{detexp1}
\text{det}\left[\prod_{i=0}^{\alpha-k-1}(\tilde{c}_l-i)\right]_{k,l=1,2,\ldots,\alpha}
& = \left|
\begin{array}{cccc}
\displaystyle \prod_{i=0}^{\alpha-2} (\tilde{c}_1-i) &
\displaystyle \prod_{i=0}^{\alpha-2} (\tilde{c}_2-i) &\ldots
&\displaystyle \prod_{i=0}^{\alpha-2} (\tilde{c}_\alpha-i) \\
\displaystyle \prod_{i=0}^{\alpha-3} (\tilde{c}_1-i) &
\displaystyle \prod_{i=0}^{\alpha-3} (\tilde{c}_2-i) &\ldots
&\displaystyle \prod_{i=0}^{\alpha-3} (\tilde{c}_\alpha-i)\\
\vdots & \vdots & \ddots & \vdots\\
\tilde{c}_1 & \tilde{c}_2 &\ldots
& \tilde{c}_\alpha\\
1&1&\ldots & 1
\end{array}
\right|.
\end{align*}
Now the repeated application of the following row operations
\begin{equation*}
k\text{th row}\to k\text{th row}+
\mathtt{S}^{(\alpha-k-i)}_{\alpha-k}\times (k+i)\text{th row}\;,\;\;
i=1,2,\ldots,\alpha-1-k
\end{equation*}
on each row, from $k=1$ to $k=\alpha-2$, of the above determinant
gives
\begin{align*}
& \text{det}\left[\prod_{i=0}^{\alpha-k-1}(\tilde{c}_l-i)\right]_{k,l=1,2,\ldots,\alpha}\nonumber\\
& = \left|
\begin{array}{cccc}
\displaystyle \sum_{j=0}^{\alpha-1}
\mathtt{S}^{(j)}_{\alpha-1}\prod_{i=0}^{j-1}(\tilde{c}_1-i) &
\displaystyle \sum_{j=0}^{\alpha-1}
\mathtt{S}^{(j)}_{\alpha-1}\prod_{i=0}^{j-1}(\tilde{c}_2-i) & \ldots
& \displaystyle \sum_{j=0}^{\alpha-1}
\mathtt{S}^{(j)}_{\alpha-1}\prod_{i=0}^{j-1}(\tilde{c}_\alpha-i)\\
\displaystyle \sum_{j=0}^{\alpha-2}
\mathtt{S}^{(j)}_{\alpha-2}\prod_{i=0}^{j-1}(\tilde{c}_1-i) &
\displaystyle \sum_{j=0}^{\alpha-2}
\mathtt{S}^{(j)}_{\alpha-2}\prod_{i=0}^{j-1}(\tilde{c}_2-i) & \ldots
& \displaystyle \sum_{j=0}^{\alpha-2}
\mathtt{S}^{(j)}_{\alpha-2}\prod_{i=0}^{j-1}(\tilde{c}_\alpha-i)\\
\vdots & \vdots & \ddots & \vdots\\
\tilde{c}_1 & \tilde{c}_2 & \ldots & \tilde{c}_\alpha\\
1 & 1 & \ldots & 1
\end{array}
\right|.
\end{align*}
We may use Lemma \ref{lem:stirling} to simplify each entry of the above determinant to obtain
\begin{align*}
 \text{det}\left[\prod_{i=0}^{\alpha-k-1}(\tilde{c}_l-i)\right]_{k,l=1,2,\ldots,\alpha} & =\left|
\begin{array}{cccc}
\tilde{c}_1^{\alpha-1} & \tilde{c}_2^{\alpha-1} & \ldots & \tilde{c}_\alpha^{\alpha-1}\\
\tilde{c}_1^{\alpha-2} & \tilde{c}_2^{\alpha-2} & \ldots & \tilde{c}_\alpha^{\alpha-2}\\
\vdots & \vdots & \ddots & \vdots\\
\tilde{c}_1 & \tilde{c}_2 & \ldots & \tilde{c}_\alpha\\
1 & 1 & \ldots & 1
\end{array}
\right|\nonumber\\
& =(-1)^{\lfloor \frac{\alpha}{2}\rfloor}
\text{det}\left[\tilde{c}_l^{k-1}\right]_{k,l=1,2,\ldots,\alpha},
\end{align*}
where $\lfloor\cdot\rfloor$ is the floor function. Noting the
relation
\begin{equation*}
\tilde{c}_k=n+\alpha-1-c_k(j_k)\;,\;\; k=1,2,\ldots,\alpha\;,
\end{equation*}
we obtain
\begin{align*}
\label{detfinal}
\text{det}\left[\prod_{i=0}^{\alpha-k-1}(\tilde{c}_l-i)\right]_{k,l=1,2,\ldots,\alpha}&=(-1)^{\lfloor
\frac{\alpha}{2}\rfloor}
\text{det}\left[\tilde{c}_l^{k-1}\right]_{k,l=1,2,\ldots,\alpha}\nonumber\\
& =(-1)^{\lfloor \frac{\alpha}{2}\rfloor}
\prod_{1\leq l<k\leq \alpha}\left(\tilde{c}_k-\tilde{c}_l\right)\nonumber\\
& = (-1)^{\lfloor \frac{\alpha^2}{2}\rfloor}
\prod_{1\leq l<k\leq \alpha}\left({c_k(j_k)}-{c_l(j_l)}\right)\nonumber\\
&= \Delta_{\alpha}(\mathbf{c}),
\end{align*}
where we have used the fact that $\lfloor\frac{\alpha^2}{2}\rfloor$
is an even number for $\alpha=1,2,\ldots$.
\end{proof}

\begin{lemma}\label{lem:det2}
Let $n\in \mathbb{Z}^+$ and $\alpha\in \mathbb{Z}^+\cup \{0\}$ with $n\geq 2$. Then
\begin{align*}
\det\left[\prod_{l=0}^{\alpha+1-i}(\tilde{z}_j-l)\;\;\; \;\;
 \prod_{l=0}^{\alpha+1-i}(\tilde{w}_k-l)\right]_{\substack{i=1,2,\ldots,\alpha+2\\
 j=1,2\\
 k=3,4,\ldots,\alpha+2}}
 &=\Delta_{\alpha+2}(\mathbf{w})
 \end{align*}
 where $\mathbf{w}=(z_1,z_2,w_3,\ldots,w_{\alpha+2})$ and
\begin{align*}
z_j&=j+l_j,\;\;\;\;\;\;\;\hspace{1cm} \;\tilde{z}_j=n+\alpha-z_j,\;\;\;j=1,2\;, \\
w_k&=k+l_k-2,\;\hspace{1cm} \tilde{w}_k=n+\alpha-w_k,\;k=3,4,\ldots,\alpha+2\;,
\end{align*}
with
\begin{align*}
l_j&=0,1,\ldots,n+\alpha-j,\;\;\;\;\;\;\;j=1,2\;, \\
l_k&=0,1,\ldots,n+\alpha+2-k,\;k=3,4,\ldots,\alpha+2\;.
\end{align*}
 \end{lemma}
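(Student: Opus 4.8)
The plan is to recognize that, despite its two-block appearance, the matrix here is homogeneous: both blocks share the single entry form $\prod_{l=0}^{\alpha+1-i}(\,\cdot-l)$, differing only in which argument is inserted. Accordingly, I would first introduce a unified column variable $u_m$ by setting $u_m=z_m$ for $m=1,2$ and $u_m=w_m$ for $m=3,\ldots,\alpha+2$, so that $\mathbf{w}=(u_1,u_2,\ldots,u_{\alpha+2})$, together with $\tilde u_m=n+\alpha-u_m$. Crucially, this affine relation holds with the \emph{same} constant $n+\alpha$ across both blocks, since $\tilde z_j=n+\alpha-z_j$ and $\tilde w_k=n+\alpha-w_k$. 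Under this relabeling the determinant collapses to $\det\bigl[\prod_{l=0}^{\alpha+1-i}(\tilde u_m-l)\bigr]_{i,m=1,\ldots,\alpha+2}$, an $(\alpha+2)\times(\alpha+2)$ determinant of exactly the shape treated in Lemma~\ref{le:App}, but with $\alpha$ there replaced by $\alpha+2$ and the arguments $\tilde c_l$ replaced by the $\tilde u_m$.

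Second, I would invoke the algebraic mechanism of Lemma~\ref{le:App}, whose conclusion is a polynomial identity in its arguments. With $\alpha$ replaced by $\alpha+2$, the row operations $i\text{th row}\to i\text{th row}+\mathtt{S}^{(\alpha+2-i-p)}_{\alpha+2-i}\times(i+p)\text{th row}$, justified by the Stirling expansion of Lemma~\ref{lem:stirling}, collapse each falling-factorial product in row $i$ into a pure power of $\tilde u_m$, turning the matrix (up to a reversal of its rows) into a Vandermonde matrix in the $\tilde u_m$. The specific ranges of the $l_m$ play no role in this reduction, so it applies verbatim even though the induced shifts would place the ``$j_l$'' of Lemma~\ref{le:App} outside its nominal range. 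Hence the determinant equals $\pm\det[\tilde u_m^{\,i-1}]_{i,m=1,\ldots,\alpha+2}=\pm\prod_{1\le p<q\le\alpha+2}(\tilde u_q-\tilde u_p)$.

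Finally, I would undo the affine substitution. Writing $\tilde u_q-\tilde u_p=u_p-u_q$ converts the Vandermonde in $\tilde u$ into $\pm\Delta_{\alpha+2}(\mathbf{w})$, and the two accumulated signs---one from reversing the $\alpha+2$ rows of the Vandermonde, one from the reflection $\tilde u_m=n+\alpha-u_m$---combine into $(-1)^{\lfloor(\alpha+2)^2/2\rfloor}=+1$, exactly as in the $\alpha$-analogue, since $\lfloor N^2/2\rfloor$ is even for every positive integer $N$. This yields $\Delta_{\alpha+2}(\mathbf{w})$ and completes the argument. The one place demanding care---the main obstacle---is the sign bookkeeping together with the parameter alignment: one must confirm that the constant in $\tilde u_m=n+\alpha-u_m$ is genuinely common to both blocks (it is, which is precisely what permits a single Vandermonde rather than two) and match it against Lemma~\ref{le:App}'s internal constant $n'+\alpha'-1$, which corresponds to taking its parameters as $(\alpha+2,\,n-1)$ and reproduces the hypothesis $n\ge2$.
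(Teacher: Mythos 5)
Your proposal is correct and follows essentially the same route as the paper: the paper's proof of Lemma~\ref{lem:det2} likewise treats the two blocks uniformly, applies the Stirling-number row operations of Lemma~\ref{le:App} (with the size shifted to $\alpha+2$) to reduce to a row-reversed Vandermonde in the $\tilde z_j,\tilde w_k$, and then cancels the row-reversal and reflection signs via the evenness of $\lfloor(\alpha+2)^2/2\rfloor$. Your explicit check that the affine constant $n+\alpha$ is common to both blocks and matches Lemma~\ref{le:App}'s internal constant under $(\alpha',n')=(\alpha+2,n-1)$ is exactly the point the paper uses implicitly.
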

\begin{proof}
\begin{align*}
& \det\left[\prod_{l=0}^{\alpha+1-i}(\tilde{z}_j-l)\;\;\; \;\;
 \prod_{l=0}^{\alpha+1-i}(\tilde{w}_k-l)\right]_{\substack{i=1,2,\ldots,\alpha+2\\
 j=1,2\\
 k=3,4,\ldots,\alpha+2}}\nonumber\\
 & \qquad=\left|\begin{array}{ccccc}
 \displaystyle \prod_{i=0}^{\alpha} (\tilde z_1-i) & \displaystyle \prod_{i=0}^{\alpha} (\tilde z_2-i) & \displaystyle \prod_{i=0}^{\alpha} (\tilde w_3-i)  & \ldots & \displaystyle \prod_{i=0}^{\alpha} (\tilde w_{\alpha+2}-i)\\
 \displaystyle \prod_{i=0}^{\alpha-1} (\tilde z_1-i) & \displaystyle \prod_{i=0}^{\alpha-1} (\tilde z_2-i) & \displaystyle \prod_{i=0}^{\alpha-1} (\tilde w_3-i)  & \ldots & \displaystyle \prod_{i=0}^{\alpha-1} (\tilde w_{\alpha+2}-i)\\
 \vdots & \vdots & \vdots & \ddots & \vdots\\
 \tilde z_1 & \tilde z_2 & \tilde w_3 & \ldots & \tilde w_{\alpha+2}\\
 1 & 1 & 1  & \ldots & 1
 \end{array}
 \right|\:.
\end{align*}
 Now, repeatedly
applying the following row operations
\begin{equation*}
k\text{th row}\to k\text{th row} +\mathtt{S}^{(\alpha+2-k-i)}_{\alpha+2-k}(k+i)\text{th row},\;\; i=1,2,\ldots,\alpha+1-k
\end{equation*}
on each row (i.e., $k=1,2,\ldots,\alpha$) and using Lemma \ref{lem:stirling}
gives
\begin{align*}
& \det\left[\prod_{l=0}^{\alpha+1-i}(\tilde{z}_j-l)\;\;\; \;\;
 \prod_{l=0}^{\alpha+1-i}(\tilde{w}_k-l)\right]_{\substack{i=1,2,\ldots,\alpha+2\\
 j=1,2\\
 k=3,4,\ldots,\alpha+2}}\nonumber\\
 & \hspace{3cm}=\left|
\begin{array}{ccccc}
\tilde z_1^{\alpha+1} & \tilde z_2^{\alpha+1} & \tilde w_3^{\alpha+1} & \ldots & \tilde w_{\alpha+2}^{\alpha+1}\\
\tilde z_1^{\alpha} & \tilde z_2^{\alpha} & \tilde w_3^{\alpha} & \ldots & \tilde w_{\alpha+2}^{\alpha}\\
\vdots & \vdots & \vdots & \ddots & \vdots\\
\tilde z_1 & \tilde z_2  & \tilde w_3 & \ldots & \tilde w_{\alpha+2}\\
1 & 1 & 1 & \ldots & 1
\end{array}
 \right|\nonumber\\
 & \hspace{3cm}=(-1)^{\lfloor\frac{\alpha+2}{2}\rfloor}\det\left[\tilde z_j^{i-1}\;\;\;\tilde w_k^{i-1}\right]_{\substack{i=1,2,\ldots,\alpha+2\\
 j=1,2\\
 k=3,4,\ldots,\alpha+2}}.
\end{align*}
Interestingly, the resultant simplified determinant is of Vandermonde type. This in turn gives
\begin{align*}
\label{detvand}
(-1)^{\lfloor\frac{\alpha+2}{2}\rfloor}\det\left[\tilde z_j^{i-1}\;\;\;\tilde w_k^{i-1}\right]_{\substack{i=1,2,\ldots,\alpha+2\\
 j=1,2\\
 k=3,4,\ldots,\alpha+2}}& =\det\left[ z_j^{i-1}\;\;\; w_k^{i-1}\right]_{\substack{i=1,2,\ldots,\alpha+2\nonumber\\
 j=1,2\\
 k=3,4,\ldots,\alpha+2}}= \Delta_{\alpha+2}(\mathbf{w}).
\end{align*}
 \end{proof}


\begin{thebibliography}{10}

\bibitem{Abromwitz}
{\sc M.~Abramowitz and I.~A.~Stegun (editors)}, {\em  Handbook of Mathematical Functions}, Dover Publications, Inc., New York, 1970.


\bibitem{Ake}
{\sc G. Akemann and P. Vivo}, {\em Compact smallest eigenvalue expressions in Wishart-Laguerre ensembles with or without a fixed trace}, J. Stat. Mech., (2011), P05020.


\bibitem{berg-chen}
{\sc C. Berg, Y. Chen, and M.~E.~H. Ismail}, {\em Small eigenvalues
of large Hankel matrices: The indeterminate case},
 Math. Scand.,
91 (2002), pp.~67--81.

\bibitem{berg}
{\sc C. Berg and R. Szwarc}, {\em The smallest eigenvalues of Hankel
matrices}, Constr. Approx., 34 (2011), pp.~107--133.


\bibitem{Cucker2}
{\sc P. B\"urgisser and F. Cucker}, {\em Smoothed analysis of Moore-Penrose inversion}, SIAM J. Matrix Anal. Appl., 31 (2010), pp.~2769--2783.


\bibitem{Cucker}
{\sc P. B\"urgisser, F. Cucker, and M. Lotz}, {\em Smoothed
analysis of complex conic condition numbers}, J. Maths. Pures Appl.,
86 (2006), pp.~293--309.


\bibitem{chen-law}
{\sc Y. Chen and N.~D. Lawrence}, {\em Small eigenvalues of large
Hankel matrices}, J. Phys. A, 32 (1999), pp.~7305--7315.

\bibitem{Chen}
{\sc Y. Chen, D.-Z. Liu, and D.-S. Zhou}, {\em Smallest eigenvalue
distribution of the fixed-trace Laguerre beta-ensemble}, J. Phys. A,
43 (2010), pp.~1--12.

\bibitem{chen-lu}
{\sc Y. Chen and D.~S. Lubinsky}, {\em Smallest eigenvalues of
Hankel matrices for exponential weights}, J. Math. Anal. Appl. 293,
(2004), pp.~476--495.

\bibitem{chenmckay}
{\sc Y. Chen, and M.~R. McKay}, {\em Coulumb fluid, Painlev\'{e}
transcendents  and the information theory of MIMO systems}, IEEE Trans. Inform. Theory, 58 (2012), pp.~4594--4634.





\bibitem{Dong}
\textsc{Z.~Chen and J.~Dongarra}, {\em Condition numbers of Gaussian random matrices}, SIAM J. Matrix Anal. Appl., 27 (2005), pp.~603--620.

\bibitem{Cucker1}
\textsc{D.~Cheung and F. Cucker}, {\em A note on level-2 condition numbers}, J. Complexity,
21 (2005), pp.~314--319.

\bibitem{Che}
\textsc{D.~Cheung and F. Cucker}, {\em Probabilistic analysis of
condition numbers for linear programming}, J. Optim. Theory Appl.,
114 (2002), pp.~55--67.



 
 \bibitem{Cucker3}
 {\sc F. Cucker, H. Diao, and Y. Wei}, {\em Smoothed analysis of some condition numbers}, Numer. Alge. Appl., 13 (2006), pp.~71--84.

 
 
 
\bibitem{Davis}
{\sc A.~W. Davis}, {\em On the ratios of the individual latent roots
to the trace of a Wishart matrix}, J. Multivariate Anal., 2 (1972),
pp.~440--443.

\bibitem{Dem2}
{\sc J. ~W. Demmel}, {\em On condition numbers and the distance to
the nearest ill-posed problem}, Numer. Math., 51 (1987),
pp.~251--289.

\bibitem{Demmel}
{\sc J. ~W. Demmel}, {\em The probability that a numerical analysis
problem is difficult}, Math. Comp., 50 (1988), pp.~449--480.


\bibitem{Eaton}
{\sc M.~L.~Eaton}, {\em Multivariate Statistics: A Vector Space Approach}, John Wiley and Sons, New York, 1983.

\bibitem{Young}
{\sc C. Eckart and G. Young}, (1936). {\em The approximation of one
matrix by another of lower rank}, Psychometrika, 1 (1936),
pp.~211--218.

\bibitem{Ed2}
{\sc A. Edelman}, {\em Eigenvalues and Condition Numbers of Random
Matrices}, Ph.D. thesis, Dept. of Math., M.I.T., 1989.

\bibitem{Ed3}
{\sc A. Edelman}, {\em On the distribution of a scaled condition
number}, Math. Comp., 58 (1992), pp.~185--190.

\bibitem{Ed1}
{\sc A. Edelman}, {\em Eigenvalues and condition numbers of random
matrices}, SIAM J. Matrix Anal. Appl., 9 (1998), pp.~543--560.

\bibitem{Peter}
{\sc P.~J. Forrester and T.~D. Hughes}, {\em Complex Wishart
matrices and conductance in mesoscopic systems: Exact results}, J.
Math. Phy., 35 (1994), pp.~6736--6747.

\bibitem{Paul}
{\sc R. W. Heath Jr. and A.~J. Paulraj}, {\em Switching between
diversity and multiplexing in MIMO systems}, IEEE Trans. Commun., 53
(2005), pp.~962--968.

\bibitem{JamesAT}
{\sc A.~T. James}, {\em Distributions of matrix variates and latent roots derived from normal samples}, Ann. Math.
Statist., 35 (1964), pp.~475--501.

\bibitem{JinMcKTCom}
{\sc S. Jin, M.~R. McKay, X.~Gao, and I.~B. Collings}, {\em MIMO
multichannel beamforming: SER and outage using new eigenvalue
distributions of complex noncentral Wishart matrices}, IEEE Trans.
Commun., 56 (2008), pp.~424--434.

\bibitem{Wal1}
{\sc P.~R. Krishnaiah and V.~B. Waiker}, {\em Simultaneous tests for
equality of latent roots against certain alternatives-I}, Ann. Inst.
Statist. Math., 23 (1971), pp.~451--468.

\bibitem{Wal2}
{\sc P.~R. Krishnaiah and V.~B. Waiker}, {\em Simultaneous tests for
equality of latent roots against certain alternatives-II}, Ann.
Inst. Statist. Math., 24 (1972), pp.~81--85.

\bibitem{Krish}
{\sc P.~R. Krishnaiah and F.~J. Schuurmann}, {\em On the evaluation
some distributions that arise in simultaneous tests for the equality
of the latent roots of the covariance matrix}, J. Multivariate
Anal., 4 (1974), pp.~265--282.


\bibitem{Mallik}
{\sc R.~K.~Mallik}, {\em The pseudo-Wishart distribution and its applications to MIMO systems}, IEEE Trans. Info. Theory, 49 (2003), pp.~2761--2769.

\bibitem{Caj}
{\sc M. Matthaiou, M. R. McKay, P.~J. Smith, and J.~A. Nossek}, {\em
On the condition number distribution of complex  Wishart matrices},
IEEE Trans. Commun., 58 (2010), pp.~1705--1717.

\bibitem{Mehta}
{\sc M. L. Mehta}, {\em Random Matrices}, 3rd ed. Elsevier Academic
Press, 2004.

\bibitem{Ost}
{\sc C. Oestges, V. Erceg, and A.~J. Paulraj},
{\em A physical scattering model for MIMO macrocellular broadband wireless channels},
IEEE J. Select. Areas Commun. 21 (2003), pp.~721--729.

\bibitem{Luis}
{\sc L. G. Ord\'{o}\~{n}ez, D.~P. Palomar, and J. R. Fonollosa},
{\em Ordered eigenvalues of a general class of Hermitian random
matrices with application to the performance analysis of MIMO
systems}, IEEE Trans. Signal Process., 57 (2009), pp.~672--689.

\bibitem{Ozipov2010}
{\sc V.~A. Osipov and E.~Kanzieper}, {\em Correlations of RMT
characteristic polynomials and integrability: Hermitian matrices},
Ann. of Physics, 325 (2010), pp.~2251--2306.

\bibitem{Prud}
{\sc A. P. Prudnikov, Yu. A. Brychkov, and O. I. Marichev}, {\em
Integrals and Series, vol.2: Special Functions}, Gordon and Breach
Science Publishers, 1988.

\bibitem{Prud2}
{\sc A. P. Prudnikov, Yu. A. Brychkov, and O. I. Marichev}, {\em
Integrals and Series, vol.5: Inverse Laplace Transforms}, Gordon and Breach
Science Publishers, 1988.


\bibitem{Rene}
{\sc J. Renegar}, {\em Is it possible to know a problem instance is
ill-posed? Some foundations for a general theory of condition
numbers}, J. Complexity, 10 (1994), pp.~1--56.

\bibitem{Smale2}
{\sc M. Shub and S. Smale}, {\em Complexity of B\'ezout's Theorem
III: Volumes and probabilities}, F.~Eyssette, A.~Galligo (Eds.),
Computational Algebraic Geometry in Progress in Mathematics,
Birkh\"auser. (1993), pp.~267--285.

\bibitem{Smale1}
{\sc S. Smale}, {\em On the efficiency of algorithms of analysis},
Bull.~Amer.~Math.~Soc., 13 (1985), pp.~87--121.

\bibitem{PeterJ}
{\sc P. J. Smith and L. M. Garth}, {\em Distribution and
characteristic functions for correlated complex Wishart matrices},
J. Multivariate Anal., 98 (2007), pp.~661--677.

\bibitem{SunMckTSP}
{\sc L. Sun, M. R. McKay, and S. Jin}, {\em Analytical performance
of MIMO multichannel beamforming in the presence of unequal power
cochannel interference and noise}, IEEE Trans. Signal Process., 57
(2009), pp.~2721--2735.

\bibitem{sze}
{\sc G. Szeg\"o}, {\em On some Hermitian forms associated with
two given curves of the complex plane,}
Trans. Amer. Math. Soc., 40 (1936), pp.~450--461.

\bibitem{Erdelyi}
{\sc G. Szeg\"o}, {\em Orthogonal Polynomials}, 4th ed. AMS
Colloquium Publications, 1975.

\bibitem{Turing}
{\sc A. M. Turing}, {\em Rounding-off errors in matrix processes},
Quart. J. Mech. Appl. Math., 1 (1948), pp.~287--308.

\bibitem{Gold}
{\sc J. Von~Neumann and H. H. Goldstine}, {\em Numerical inverting
of matrices of higher order}, Bull. Amer. Math. Soc., 53 (1947),
pp.~1021--1099.

\bibitem{Gold1}
{\sc J. Von~Neumann and H. H. Goldstine}, {\em Numerical inverting
of matrices of higher order-II}, Proc. Amer. Math. Soc., 2 (1951),
pp.~188--202.

\bibitem{Watson}
{\sc G. N. Watson}, {\em A Treatise on the Theory of Bessel Functions}, Cambridge University Press, 1922.

\bibitem{Lu}
{\sc L. Wei, M. R. McKay, and O. Tirkkonen}, {\em Exact Demmel
condition number distribution of complex Wishart matrices via the
Mellin transform}, IEEE Commun. Lett., 15 (2011), pp.~175--177.

\bibitem{widom-wilf}
{\sc H. Widom and H. S. Wilf}, {\em Small eigenvalues of large
Hankel matrices}, Proc. Amer. Math. Soc., 17 (1966), pp.~338--344.

\bibitem{Zan}
{\sc A. Zanella, M. Chiani, and M. Z. Win}, {\em On the marginal
distribution of the eigenvalues of Wishart matrices}, IEEE Trans.
Commun., 57 (2009), pp.~1050--1060.

\bibitem{Mat}
{\sc C. Zhong, M. R. McKay, T. Ratnarajah, and K.-K. Wong}, {\em
Distribution of the Demmel condition number of Wishart matrices},
IEEE Trans. Commun., 59 (2011), pp.~1309--1320.

\end{thebibliography}
\end{document}